\newcommand{\cal}{\mathcal}
\def\epsilon{\varepsilon}
\def\phi{\varphi}
\def\hat{\widehat}
\newcommand{\Out}{\mbox{Out}}
\newcommand{\Aut}{\mbox{Aut}}
\newcommand{\FN}{F_n}   
\newcommand{\Z}{\mathbb Z}
\newcommand{\N}{\mathbb N}
\def\strutdepth{\dp\strutbox}
\def \ss{\strut\vadjust{\kern-\strutdepth \sss}}
\def \sss{\vtop to \strutdepth{
\baselineskip\strutdepth\vss\llap{$\diamondsuit\;\;$}\null}}
\def\strutdepth{\dp\strutbox}
\def \sst{\strut\vadjust{\kern-\strutdepth \ssss}}
\def \ssss{\vtop to \strutdepth{
\baselineskip\strutdepth\vss\llap{$\spadesuit\;\;$}\null}}
\def\strutdepth{\dp\strutbox}
\def \ssh{\strut\vadjust{\kern-\strutdepth \sssh}}
\def \sssh{\vtop to \strutdepth{
\baselineskip\strutdepth\vss\llap{$\heartsuit\;\;$}\null}}
\def\qed{\hfill\rlap{$\sqcup$}$\sqcap$\par}
\def\bar{\overline}
\def\tilde{\widetilde}
\def\strutdepth{\dp\strutbox}
\def \ss{\strut\vadjust{\kern-\strutdepth \sss}}
\def \sss{\vtop to \strutdepth{
\baselineskip\strutdepth\vss\llap{$\diamondsuit\;\;$}\null}}
\def\strutdepth{\dp\strutbox}
\def \sst{\strut\vadjust{\kern-\strutdepth \ssss}}
\def \ssss{\vtop to \strutdepth{
\baselineskip\strutdepth\vss\llap{$\spadesuit\;\;$}\null}}
\def\qed{\hfill\rlap{$\sqcup$}$\sqcap$\par}
\newtheorem*{thm*}{Theorem}
\newtheorem*{claim*}{Claim}
\newtheorem{thm}{Theorem}[section]
\newtheorem{cor}[thm]{Corollary}
\newtheorem{lem}[thm]{Lemma}
\newtheorem{prop}[thm]{Proposition}
\theoremstyle{definition}
\newtheorem{defn}[thm]{Definition}
\newtheorem{defn-rem}[thm]{Definition-Remark}
\newtheorem{rem}[thm]{Remark}
\theoremstyle{remark}
\numberwithin{equation}{section}
\begin{document}

\author[K.~Ye]{Kaidi Ye}

\title[Partial Dehn twists -  a dichotomy]{Partial Dehn twists of free groups relative to local Dehn twists - a dichotomy}

\begin{abstract} 
A criterion for quadratic or higher growth of group automorphisms is established which are represented by graph-of-groups automorphisms with certain well specified properties.

As a consequence, it is derived (using results of a previous paper of the author) that every partial Dehn twist automorphism of $\FN$ relative to local Dehn twist automorphisms is either an honest Dehn twist automorphism, or else has quadratic growth.
\end{abstract}

\subjclass[1991]{Primary 20F, Secondary 20E}
\keywords{graph-of-groups, free group, Dehn twists, Combinatorics of words}

\maketitle

\section{Introduction}

Dehn twist are well known from surface homeomorphisms: Any set $\cal C$ of pairwise disjoint essential closed curves $c_i$ on a surface $S$, together with a set of {\em twist exponents} $n_i \in \Z$, defines a homeomorphism $h: S \to S$ through ``twisting'' $S$ along each $c_i$ precisely $n_i$ times.

The set $\cal C$ defines canonically a {\em dual}
graph-of-groups $\cal G$ with isomorphism $\pi_1 \cal G \cong \pi_1 S$, 
where the vertex groups of $\cal G$ are the fundamental groups of the components of $S \smallsetminus \cal C$ and the edge groups are isomorphic to $\Z$. 
The automorphism $h_*: \pi_1 S \to \pi_1 S$ induced by the multi-Dehn-twist $h$ can be described algebraically through a graph-of-groups isomorphism $H: \cal G \to \cal G$.

This natural correspondence between geometric and algebraic data has given rise to a more general definition of a {\em Dehn twist automorphism} $\phi$ of a group $G$, via a graph-of-groups $\cal G$ equipped with an isomorphism $\pi_1 \cal{G} \cong G$ and a graph-of-groups isomorphism $H: \cal G \to \cal G$, which satisfy extra conditions that mimic the above described surface situation, so that one gets
$H_* = \phi$ (up to inner automorphisms).

A special case, which is useful in many circumstances, is given by requiring that $H$ acts trivially on the underlying graph $\Gamma(\cal G)$ and on each of the vertex groups $G_v$ of $\cal G$, and that furthermore all edge groups of $\cal G$ are trivial. In this case the automorphism $H_*: \pi_1 \cal G \to \pi_1 \cal G$ is determined by the family of {\em correction terms} $\delta_e \in G_{\tau(e)}$ for any edge $e$ of $\cal G$, where $\tau(e)$ denotes the terminal vertex of $e$.

\medskip

In the paper \cite{KY01} the more general case of a {\em partial Dehn twist $H: \cal G \to \cal G$ relative to a subset $\cal V$ of vertices of $\cal G$} 
has been investigated, which differs from the above described situation in that
for any $v \in \cal V$ the induced vertex group automorphism $H_v: G_v \to G_v$ may not be the identity.

If for each $v \in \cal V$ the map $H_v: \cal G_v \to \cal G_v$ is itself a 
Dehn twist automorphism, then $H: \cal G \to\cal G$ is called a
{\em partial Dehn twist relative to a family of local Dehn twists}. It is shown in \cite{KY01} that in this case $H$ can be {\em blown-up} to a refined graph-of-groups isomorphism which is a Dehn twist that incorporates both, $H$ and the family of all $H_v$, provided that the following criterion is satisfied:

\medskip
\noindent
{\em Criterion:} For every edge $e$ of $H$ with endpoint $v \in \cal V$ the correction term $\delta_e$ is $H_v$-zero.

\medskip

Here for any graph-of-groups automorphism $H: \cal G \to \cal G$,
the associated path group $\Pi(\cal G)$, and any vertex $v$ of $\cal G$
an element $g \in \pi_1 (\cal G, v) \subset \Pi(\cal G)$ is $H$-zero if and only if there exists an element $h \in \Pi(\cal G)$ such that $h^{-1} g H_{*v}(h)$ has $\cal G$-length 0.

The main result of this paper is to show that this sufficient criterion is also necessary. In fact, we show:

\begin{thm}
\label{q-growth}
Let $H: \cal G \to \cal G$ be a partial Dehn twist relative to 
a subset $\cal V$ of vertices of $\cal G$.
Assume for some $v \in \cal V$ that 
the vertex group $G_v$ is free, and that
$H_v: G_v \to G_v$ is a
Dehn twist automorphisms.

If there is an edge $e$ of $\cal G$ with correction term $\delta_e \in G_v$ that is not $H_v$-zero, then the automorphism $H_*: \pi_1 \cal G \to \pi_1 \cal G$ has at least quadratic growth.
\end{thm}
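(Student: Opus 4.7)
The plan is to exhibit a concrete element $g\in\pi_1(\cal G)$ for which the word length of $H_*^n(g)$ grows at least like $n^2$. Pick a loop $g$ based at $v$ that traverses the edge $e$ exactly once; such a loop is available because $e$ sits in a non-trivial loop of $\cal G$ (if $e$ is a loop at $v$ take the associated path-group letter $t_e$, otherwise complete $t_e$ by a non-trivial return path through the other component). Using the path-group formalism of \cite{KY01} together with the fact that in a partial Dehn twist $H$ fixes the underlying graph, is trivial on all vertex groups outside $\cal V$, and agrees with $H_v$ on $G_v$, a straightforward induction on $n$ shows that the reduced $\cal G$-normal form of $H_*^n(g)$ contains, at the $v$-side of its traversal of $e$, the factor
\[
P_n \;=\; \delta_e\cdot H_v(\delta_e)\cdot H_v^{2}(\delta_e)\cdots H_v^{n-1}(\delta_e)\in G_v.
\]

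It is therefore enough to show $|P_n|\geq c\,n^2$, where $|\cdot|$ denotes the $\cal G_v$-length in the graph-of-groups defining $H_v$ (equivalent up to constants to any word length on $G_v$). The upper bound $|P_n|=O(n^2)$ is automatic from the linear growth of the Dehn twist $H_v$, which gives $|H_v^k(\delta_e)|=O(k)$. For the lower bound, the natural benchmark is the $H_v$-zero case: if $\delta_e=h\cdot H_v(h)^{-1}$ for some $h\in\Pi(\cal G_v)$ witnessing $H_v$-zeroness, then $P_n$ telescopes to $h\cdot H_v^n(h)^{-1}$, hence has length $O(n)$. The theorem is the contrapositive of this observation, made quantitative.

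The main obstacle is precisely that quantitative contrapositive. One must analyse, using the graph-of-groups $\cal G_v$ of the local Dehn twist $H_v$ (with its $\mathbb Z$-edge-groups and twist edges), the cancellation in $\cal G_v$-normal form between consecutive factors $H_v^k(\delta_e)$ and $H_v^{k+1}(\delta_e)$; such cancellation can only consume a bounded portion of their twist-layer structure. If the total cancellation across the $n$ factors were enough to bring $|P_n|$ below order $n^2$, the successive partial cancellations could be assembled into a single element $h\in\Pi(\cal G_v)$ with $h^{-1}\delta_e H_v(h)$ of $\cal G_v$-length $0$, contradicting the non-$H_v$-zero hypothesis. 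The construction of $h$ and the assembly step should use the path-group normal-form machinery and the characterisation of $H_v$-zero developed in \cite{KY01}. Once $|P_n|\geq c\,n^2$ is secured, the theorem follows: $P_n$ persists as an un-cancelled block in the $\cal G$-normal form of $H_*^n(g)$, and the $\cal G$-length bounds any word length on $\pi_1\cal G$ from below up to a multiplicative constant.
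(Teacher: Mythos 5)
Your high-level plan matches the paper's: iterate $H_*$ on a loop through $e$, isolate the iterated product of correction terms $P_n=\delta_e H_v(\delta_e)\cdots H_v^{n-1}(\delta_e)$ in the $v$-vertex-group factor of the normal form, and show $|P_n|\gtrsim n^2$, observing that the telescoping in the $H_v$-zero case is exactly what fails when $\delta_e$ is not $H_v$-zero. This is the right picture, and it is what the paper does. However, there are two genuine gaps.

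First, your argument for $|P_n|\geq c\,n^2$ is not a proof. You claim that ``if the total cancellation across the $n$ factors were enough to bring $|P_n|$ below order $n^2$, the successive partial cancellations could be assembled into a single element $h$ with $h^{-1}\delta_e H_v(h)$ of $\cal G_v$-length $0$.'' This reassembly is not supplied, and as stated it does not work: note that $P_{n-1}^{-1}\delta_e H_v(P_{n-1})=H_v^{n-1}(\delta_e)$, so conjugating by partial products of $P_n$ produces large elements, not short ones, and ``sub-quadratic'' is far from ``$\cal G_v$-length $0$''. The correct quantitative mechanism, which the paper supplies, is: because $\delta_e$ is not $H_v$-zero, its $\cal T$-decomposition over the efficient Dehn twist representing $H_v$ has $\cal T$-length $q\geq 1$ (Proposition~\ref{Dehn-twist-lifts+}), hence $|H_v^k(\delta_e)|$ grows at least linearly in $k$; and the no-positive-bonding condition for efficient Dehn twists plus the bounded-cancellation estimates for $\cal T$-products (Propositions~\ref{T-word-length} and~\ref{prop1}) control all the interior cancellations in $P_n$, giving $|P_n|\geq \sum_k|H_v^k(\delta_e)|-O(n)=\Theta(n^2)$ (Proposition~\ref{quadratic1}). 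Without this linear-lower-bound-on-each-factor step, the contrapositive you gesture at has no traction.

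Second, even granting $|P_n|\gtrsim n^2$, your claim that ``$P_n$ persists as an un-cancelled block in the $\cal G$-normal form of $H_*^n(g)$'' is unjustified for a loop that crosses $e$ exactly once. The $v$-vertex-group entry of $H__*^n(g)$ is not $P_n$ alone but a product of the form $P_n^{-1}\,H_v^n(r)\,Q_n$, where $Q_n$ is the analogous iterated product of the correction term of the next edge at $v$; $Q_n$ can also grow quadratically, and nothing you say rules out massive cancellation between $P_n^{-1}$ and $Q_n$. The paper sidesteps this by choosing a cyclically reduced loop of the form $t_e\,u\,t_e^{-1}\cdots$ (via minimality and Lemma~\ref{lemma6}), so that the $v$-factor is the conjugate $P_n^{-1}u P_n$ rather than an interaction of two unrelated blocks, and then it invokes $\mathrm{rk}(\mathrm{Fix}(H_v))\geq 2$ together with the cancellation-blocking Lemma~\ref{cancellation-blocker} to choose $u\in\mathrm{Fix}(H_v)$ for which the cancellation in $P_n^{-1}u P_n$ is bounded along a subsequence. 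Some such device is needed; you cannot simply assert persistence of $P_n$.
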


Since Dehn twist automorphisms are known to have linear growth, this shows that $H_*$ is not conjugate to any Dehn twist automorphism. In particular, $H$  can indeed not be blown up via the local Dehn twists $H_v$ to obtain a global Dehn twist of $\pi_1 \cal G$.

By combining this theorem with the main result of \cite{KY01}, we obtain:

\begin{cor}
\label{q-growth+}
Let $\hat \phi \in Out(\FN)$ be represented by a partial Dehn twist 
relative to a family of local Dehn twists.

Then either $\hat \phi$ is itself a Dehn twist automorphism, or else $\hat \phi$ has at least quadratic growth.
\end{cor}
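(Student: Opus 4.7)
The proof is a direct dichotomy obtained by juxtaposing Theorem \ref{q-growth} with the ``blow-up'' result of \cite{KY01}. Let $H: \mathcal{G} \to \mathcal{G}$ be a partial Dehn twist relative to a family of local Dehn twists $H_v: G_v \to G_v$ (for $v$ in some set $\mathcal{V}$ of vertices) which represents $\hat\phi$, so that $\pi_1 \mathcal{G} \cong F_N$ and $H_* = \phi$ up to inner automorphisms for some $\phi$ in the class $\hat\phi$. The plan is to examine whether the Criterion stated in the introduction is satisfied by $H$ or not.

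\emph{Case 1: the Criterion holds}, i.e.\ for every edge $e$ of $\mathcal{G}$ with endpoint $v \in \mathcal{V}$ the correction term $\delta_e \in G_{\tau(e)}$ is $H_v$-zero. Then the main result of \cite{KY01} applies and produces a blow-up of $H$ to a refined graph-of-groups isomorphism $\widetilde H: \widetilde{\mathcal{G}} \to \widetilde{\mathcal{G}}$ which is an honest Dehn twist and which still induces $\phi$ on $\pi_1\widetilde{\mathcal{G}} \cong \pi_1\mathcal{G} \cong F_N$. Hence $\hat\phi$ is a Dehn twist automorphism, which is the first alternative of the corollary.

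\emph{Case 2: the Criterion fails.} Then there exists a vertex $v \in \mathcal{V}$ and an edge $e$ of $\mathcal{G}$ incident to $v$ such that the correction term $\delta_e \in G_v$ is not $H_v$-zero. Because $\pi_1 \mathcal{G} \cong F_N$ is free, every vertex group $G_v$ is (isomorphic to a subgroup of $F_N$, hence) free, so the freeness hypothesis of Theorem \ref{q-growth} is automatic. By assumption $H_v$ is a Dehn twist automorphism of $G_v$, so Theorem \ref{q-growth} applies and yields that $H_* = \phi$ has at least quadratic growth. Since growth rates of automorphisms are invariants of the outer class, $\hat\phi$ has at least quadratic growth, which is the second alternative.

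The two cases are exhaustive and mutually exclusive, so the corollary follows at once. There is no real obstacle beyond checking that the hypotheses of the two invoked results are met; the only mild point to verify carefully is that Case 1 and Case 2 correspond exactly to the Criterion holding and failing, respectively, and that the freeness of the vertex group appearing in Case 2 is indeed automatic from $\pi_1\mathcal{G} \cong F_N$ via the Nielsen-Schreier theorem applied to the vertex group (viewed as a subgroup of $F_N$ through the Bass-Serre theory).
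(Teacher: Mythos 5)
Your proof is correct and follows essentially the same path as the paper: a two-case dichotomy on whether all correction terms at $\mathcal{V}$-vertices are locally ($H_v$-)zero, invoking the blow-up theorem of \cite{KY01} in the affirmative case and the quadratic-growth theorem in the negative one. The paper routes the second alternative through Theorem~\ref{main-result} and Corollary~\ref{rel-D-twists} rather than citing Theorem~\ref{q-growth} directly, but these are stated as equivalent, and your observation that freeness of $G_v$ is automatic (vertex groups of a graph-of-groups embed into $\pi_1\mathcal{G} \cong F_N$, hence are free by Nielsen--Schreier) is exactly the reason the hypothesis is innocuous.
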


The proof of this corollary is algorithmic, i.e. it can be effectively decided which alternative of the stated dichotomy holds. This is a crucial ingredient in the author's work \cite{KY}, where an algorithm is given that decides whether a polynomial growth automorphism of a free group $F_n$ is, up to passing to a power, induced by a surface homeomorphism.  It is also the starting point of a more detailed analysis of the growth of conjugacy classes for polynomially growing automorphisms of $F_n$, see \cite{KY}.

\subsection*{Acknowledgements}
${}^{}$

This paper is part of my PhD thesis, 
and I would like to thank sincerely my advisors, Arnaud Hilion and Martin Lustig, 
for their advice and encouragement.

\section{Graphs-of-groups and their isomorphsms}


The purpose of this and the following section is to briefly recall some basic knowledge 
and to establish some preliminary lemmas about 
graph-of-groups, Dehn twists on graph-of-groups, 
efficient Dehn twist as well as the notion of $H$-conjucation, 
which is introduced in \cite{KY01}. 

\subsection{Basics on graphs-of-groups}\label{graphofgroups}
${}^{}$

Most of our notations are taken from \cite{CL02}; 
we refer the readers to \cite{Serre}, \cite{RW} and \cite{Bass} for more detailed informations and discussions.

Throughout this paper, 
we refer to a {\it graph} as a finite, non-empty, connected graph in the sense of Serre (cf. \cite{Serre}). 

For a graph $\Gamma$, 
we denote by $V(\Gamma)$, $E(\Gamma)$ its {\it vertex set} and {\it edge set} respectively.
For an edge $e \in E(\Gamma)$, 
we deonote by $\tau(e)$ its {\it terminal vertex} and $\tau(\bar{e})$ its {\it initial vertex}.
The {\it inverse} of an edge $e$ is denoted by $\bar{e}$.

Notice in particular that our graph $\Gamma$ is non-oriented 
while one can always choose an {\it orientation} $E^{+}(\Gamma) \subset E(\Gamma)$, 
which satisfies that 
$E^{+}(\Gamma) \cup \bar{E}^{+}(\Gamma)=E(\Gamma)$ 
and 
$E^{+}(\Gamma) \cap \bar{E}^{+}(\Gamma)=\emptyset$, 
where $\bar{E}^{+}(\Gamma)=\{\bar{e} \mid e \in E^{+}(\Gamma)\}$.

\begin{defn}\label{graph of groups}
A {\it graph-of-groups} $\cal{G}$ is defined by
$$ \cal{G}=(\Gamma, (G_{v})_{v\in V(\Gamma)}, (G_{e})_{e \in E(\Gamma)}, (f_{e})_{e\in E(\Gamma)})$$
where:
\begin{enumerate}
\item $\Gamma$ is a graph, called the {\it underlying} graph;
\item each $G_{v}$ is a group, called the {\it vertex group} of $v$;
\item each $G_{e}$ is a group, called the {\it edge group} of $e$, 
and we require $G_{e}=G_{\bar{e}}$ for every $e \in E(\Gamma)$;
\item for each $e\in E(\Gamma)$,
the map 
$f_{e}: G_{e} \rightarrow G_{\tau(e)}$ is an injective {\it edge homomorphism}.
\end{enumerate}
\end{defn}

Unless otherwise stated, in this paper we will always assume that all vertex and all edge groups of any graph-of-groups $\cal G$ are finitely generated.

Given a graph-of-groups $\cal{G}$, 
we usually denote by $\Gamma(\cal{G})$ the graph underlying it. 
The vertex set of $\Gamma(\cal{G})$ is denoted by $V(\cal{G})$ 
while the edge set is denoted by $E(\cal{G})$.

\begin{defn} \label{bassgroup}
The {\it word group} $W(\cal{G})$ of a graph-of-groups $\cal{G}$ is 
the free product of vertex groups and the free group generated by {\it stable letters} $(t_{e})_{e \in E(\Gamma)}$, 
i.e. $W(\cal{G})=*(G_{v})_{v \in V(\Gamma)}*F(\{t_{e}; e\in E(\Gamma)\})$.

The {\it path group} 
(sometimes also called {\it Bass group}) 
of $\cal{G}$ is defined by $\Pi(\cal{G})= W(\cal{G})/ R$, 
where $R$ is the normal subgroup subjects to the following relations:
\begin{enumerate}
\item[$\diamond$] $t_{e}=t_{\bar{e}}^{-1}$, for every $e \in E(\Gamma)$;
\item[$\diamond$] $f_{\bar{e}}(g)=t_{e}f_{e}(g)t_{\bar{e}}^{-1}$, 
for every $e \in E(\Gamma)$ and every $g\in G_{e}$.
\end{enumerate}
\end{defn}

\begin{rem}
A {\it word} $w \in W(\cal{G})$ can always be written in the form 
$w = r_{0}t_{1}r_{1}...r_{q-1}t_{q}r_{q}$ ($q \geq 0$), 
where each $t_{i} \in F(\{t_{e}; 
e \in E(\Gamma)\})$ stands for the stable letter of the edge $e_i$ 
and each $r_{i} \in *(G_{v})_{v \in V(\Gamma)}$.

The sequence $(t_{1}, t_{2}, ..., t_{q})$ is called the {\it path type} of $w$, 
the number $q$ is called the {\it path length} of $w$. 
In this case, we say that $e_{1}e_{2}...e_{q}$ is the path underlying $w$. 
Two path types 
$(t_{1}, t_{2}, ..., t_{q})$ 
and $(t^{\prime}_{1}, t^{\prime}_{2}, ..., t^{\prime}_{s} )$ 
are said to be same if and only if $q=s$ and $t_{i}=t^{\prime}_{i}$ for each $1 \leq i \leq q$.
\end{rem}

\begin{defn}
Let $w \in W(\cal{G})$ be a word of the form $w = r_{0}t_{1}r_{1}...r_{q-1}t_{q}r_{q}$. 
The word $w$ is said to be {\it connected} if $r_{0} \in G_{\tau(\bar{e}_{1})}$, 
$r_{q} \in G_{\tau(e_{q})}$, 
and 
$\tau(e_{i})=\tau(\bar{e}_{i+1})$, 
$r_{i} \in G_{\tau(e_{i})}$, 
for $i=1,2,...,q-1$.

Moreover, if $w$ is connected and $\tau(e_{q})=\tau(\bar{e}_{1})$, 
we say that $w$ is a {\it closed connected word issued at the vertex $\tau(e_{q})$}.
\end{defn}

\begin{defn}
Let $w=r_{0}t_{1}r_{1}...r_{q-1}t_{q}r_{q} \in W(\cal{G})$, 
$w$ is said to be {\it reduced} if it satisfies:
\begin{enumerate}
\item[$\diamond$] if $q=0$, then $w = r_{0}$ isn't equal to the unit element;
\item[$\diamond$] if $q>0$, then whenever $t_{i} = t_{i+1}^{-1}$ for some $1 \leq i \leq q-1$ we have $r_{i}\not\in f_{e_{i}}(G_{e_{i}})$.
\end{enumerate}

Moreover the word $w$ is said to be {\it cyclically reduced} if it is reduced and
if $q>0$ and $t_{1} = t_{q}^{-1}$, then $r_{q}r_{0}\not\in f_{e_{q}}(G_{e_{q}})$.
\end{defn}

We recall the following facts.
\begin{prop}
\label{reduced-words-existence}
For any graph-of-groups $\cal{G}$, the following holds:
\begin{enumerate}
\item Every non-trivial element of $\Pi(\cal{G})$ can be represented as a reduced word.
\item Every reduced word is a non-trivial element in $\Pi(\cal{G})$.
\item If $w_{1},w_{2} \in W(\cal{G})$ are two reduced words representing the same element in $\Pi(\cal{G})$, 
then $w_{1}$ and $w_{2}$ are of the same path type. 
In particular, $w_{2}$ is connected if and only if $w_{1}$ is connected.
\end{enumerate}

\end{prop}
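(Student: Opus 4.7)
The plan is to derive all three parts from the standard Bass--Serre normal-form theorem for graphs of groups.

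For part (1), I proceed by finite rewriting on the path length. Given any word $w = r_0 t_1 r_1 \cdots t_q r_q \in W(\cal G)$ representing a non-trivial element of $\Pi(\cal G)$, if $w$ fails the reducedness condition at some index $i$, then $t_i = t_{i+1}^{-1}$ (so $e_{i+1} = \bar{e}_i$) and $r_i = f_{e_i}(g)$ for some $g \in G_{e_i}$. The defining relation $f_{\bar{e}_i}(g) = t_{e_i} f_{e_i}(g) t_{\bar{e}_i}^{-1}$ rewrites the subword $t_i r_i t_{i+1}$ as the single vertex-group element $f_{\bar{e}_i}(g) \in G_{\tau(\bar{e}_i)}$, which combines with the adjacent $r_{i-1}$ and $r_{i+1}$ into a single element of $G_{\tau(\bar{e}_i)}$. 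The path length strictly decreases by $2$, so the process terminates in a reduced word (or, if the element happened to be trivial, in the empty word -- ruled out by hypothesis once part (2) is in hand).

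Parts (2) and (3) rest on the normal-form theorem, which I would establish as follows. Fix a spanning tree $T \subset \Gamma(\cal G)$ and, for each oriented edge $e$, a coset transversal $S_e \subset G_{\tau(e)}$ of $f_e(G_e)$ containing $1$. Write $\Pi(\cal G)$ as the iterated amalgamated free product associated with the subtree-of-groups carried by $T$, followed by an HNN extension with stable letter $t_e$ for each edge $e \in E(\Gamma(\cal G)) \setminus E(T)$. Combining the classical normal-form theorem for amalgamated free products with Britton's lemma for HNN extensions yields: every element of $\Pi(\cal G)$ has a unique normal-form representative, obtained from any reduced representative by replacing each $r_i$ (for $i \geq 1$) with its unique coset representative in the appropriate $S_{\bar{e}_i}$. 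This substitution preserves both the path type and the reducedness condition. Hence any reduced word of path length $q \geq 1$ has a non-trivial normal form and thus is non-trivial in $\Pi(\cal G)$ (the $q=0$ case is tautological), proving part (2); and two reduced words representing the same element yield the same normal form and therefore share a path type, proving part (3).

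The main technical obstacle is the uniqueness of the normal form itself. I would either cite this from \cite{Serre} or \cite{Bass}, or alternatively give the more geometric, self-contained proof by constructing the Bass--Serre tree $\tilde X$ of $\cal G$ on which $\Pi(\cal G)$ acts without inversions with vertex and edge stabilizers conjugate to the $G_v$ and to the $f_e(G_e)$. A reduced word corresponds bijectively to a non-backtracking edge path in $\tilde X$ starting at a fixed basepoint, and uniqueness of such paths in a tree immediately delivers parts (2) and (3): the length of the path equals $q$, its consecutive edge labels recover the path type, and its non-degeneracy forces the word to represent a non-trivial element.
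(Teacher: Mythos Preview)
The paper does not prove this proposition: it is introduced with ``We recall the following facts'' and stated without proof, deferring to the standard references \cite{Serre}, \cite{Bass}, \cite{RW}. Your sketch is correct and is precisely the classical argument one finds in those sources---the rewriting reduction for (1), and either the coset normal form or the Bass--Serre tree action for (2) and (3)---so there is nothing to compare. One small indexing slip: with the paper's convention $t_e f_e(g) t_e^{-1} = f_{\bar e}(g)$ and left-to-right normalization, the intermediate coefficient $r_i$ ends up in $S_{\bar e_{i+1}}$ (or, pushing the other way, in $S_{e_i}$), not $S_{\bar e_i}$; this does not affect the argument.
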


\begin{defn}(fundamental groups)
\begin{enumerate}
\item[1.] Fundamental groups based at $v_0$

For any $v_{0} \in V(\Gamma)$,
the {\it fundamental group based at $v_{0}$}, 
denoted by $\pi_{1}(\cal{G}, v_{0})$, 
consists of the elements in $\Pi(\cal{G})$ that are closed connected words issued at $v_{0}$.

For a vertex $w_{0} \in V(\Gamma)$ different from $v_{0}$, 
we have $\pi_{1}(\cal{G},v_{0}) \cong \pi_{1}(\cal{G},w_{0})$. 
In fact,
let $W \in \Pi(\cal{G})$ be a connected word with underlying path from $v_{0}$ to $w_{0}$. 
The restriction of 
$ad_{W} : \Pi(\cal{G}) \rightarrow \Pi(\cal{G})$ to $\pi_{1}(\cal{G},w_{0})$ 
induces an isomorphism from $\pi_{1}(\cal{G},w_{0})$ to $\pi_{1}(\cal{G},v_{0})$. 
Sometimes we write $\pi_{1}(\cal{G})$ when the choice of basepoint doesn't make a difference.

\item[2.] Fundamental groups at a maximal tree $T_0$

The {\it fundamental group at $T_{0}$}, 
denoted by $\pi_{1}(\cal{G}, T_{0})$, 
is generated by the groups $G_v$, for all $v \in V(\Gamma)$,
and the elements $t_e$, for all $e \in E(\Gamma)$,
subjects to the relations:
\begin{enumerate}
\item[$\diamond$]
$t_e^{-1}=t_{\bar{e}}$, $t_e f_{e}(g) t_{e}^{-1}=f_{\bar{e}}(g)$, for $e \in E(\Gamma)$, $g \in G_{e}$;
\item[$\diamond$]
$t_e = 1$, for $e \in E(T_0)$.
\end{enumerate}
By defintion we have immediately 
$$\pi_{1}(\cal{G}, T_0)= \Pi(\cal{G}) / \ll t_e=1, e \in E(T_0)\gg.$$

\end{enumerate}
\end{defn}

It's shown in the book of Serre \cite{Serre} that the above two definitions of fundamental groups are equivalent.

\begin{thm}[Serre]
For a graph-of-groups $\cal{G}$, let $v_0$ be a vertex and $T_0$ a maximal tree.
Then $\pi_{1}(\cal{G},v_0) \cong \pi_{1}(\cal{G}, T_0)$.
\end{thm}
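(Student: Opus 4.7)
My plan is to construct two mutually inverse homomorphisms between $\pi_1(\cal G, v_0)$ and $\pi_1(\cal G, T_0)$. The forward map $\Phi: \pi_1(\cal G, v_0) \to \pi_1(\cal G, T_0)$ is simply the restriction to the subgroup $\pi_1(\cal G, v_0) \subset \Pi(\cal G)$ of the canonical quotient $\Pi(\cal G) \twoheadrightarrow \Pi(\cal G)/\langle\langle t_e : e \in E(T_0)\rangle\rangle = \pi_1(\cal G, T_0)$, as already noted in the excerpt. The substance of the proof lies in going back.

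For each vertex $v \in V(\Gamma)$ I fix the unique reduced edge path in $T_0$ from $v_0$ to $v$ and let $\gamma_v \in \Pi(\cal G)$ be the corresponding connected word with all vertex-group interpolations set to the identity. This forces $\gamma_{v_0} = 1$, and for every edge $e \in E(T_0)$ oriented so that $\tau(\bar e) = u$, $\tau(e) = v$ one has $\gamma_v = \gamma_u\, t_e$. I then define $\Psi: \pi_1(\cal G, T_0) \to \pi_1(\cal G, v_0)$ on generators by
\[
\Psi(g) = \gamma_v\, g\, \gamma_v^{-1} \quad (g \in G_v), \qquad \Psi(t_e) = \gamma_{\tau(\bar e)}\, t_e\, \gamma_{\tau(e)}^{-1},
\]
each of which is visibly a closed connected word issued at $v_0$, hence an element of $\pi_1(\cal G, v_0)$. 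The main technical step is to check that $\Psi$ descends to the quotient. The vertex-group relations survive because $g \mapsto \gamma_v g \gamma_v^{-1}$ is a homomorphism on each $G_v$; the Bass relations $t_{\bar e} = t_e^{-1}$ and $t_e f_e(g) t_e^{-1} = f_{\bar e}(g)$ are inherited from $\Pi(\cal G)$ after the interior $\gamma$-factors telescope; and the tree relation $\Psi(t_e) = 1$ for $e \in E(T_0)$ is immediate from the convention $\gamma_{\tau(e)} = \gamma_{\tau(\bar e)}\, t_e$.

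It then remains to verify that $\Phi$ and $\Psi$ are inverse to each other. The composite $\Phi \circ \Psi$ is the identity on generators because the factors $\gamma_v$ map to $1$ in $\pi_1(\cal G, T_0)$ once all tree stable letters are killed. For $\Psi \circ \Phi$, I take any $w \in \pi_1(\cal G, v_0)$ and, invoking Proposition \ref{reduced-words-existence}, represent it by a reduced closed connected word $w = r_0 t_1 r_1 \cdots t_q r_q$ with underlying vertex sequence $v_0 = v_0', v_1', \ldots, v_q' = v_0$. Applying $\Psi$ factor by factor, each pair $\gamma_{v_i'}^{-1} \gamma_{v_i'}$ between consecutive blocks cancels, leaving $\Psi\Phi(w) = \gamma_{v_0} w\, \gamma_{v_0}^{-1} = w$; the path-type part of Proposition \ref{reduced-words-existence} guarantees that this value is independent of the chosen reduced representative.

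I do not expect any serious obstacle: this is classical and already attributed to Serre in the excerpt. The only step that needs any care is the compatibility of the chosen paths $\gamma_v$ with the tree $T_0$, which is settled by imposing the single uniform convention $\gamma_v = \gamma_u\, t_e$ along each tree edge and is precisely what makes the verification of the tree relations $t_e = 1$ automatic.
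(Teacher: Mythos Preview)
Your argument is correct and is exactly the classical construction found in Serre's book. Note, however, that the paper does not supply its own proof of this theorem: it is stated as ``Theorem [Serre]'' and simply attributed to \cite{Serre}, so there is no in-paper proof to compare against. Your write-up therefore goes beyond what the paper does, but in the expected direction.

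One minor remark: for the verification that $\Psi\circ\Phi = \mathrm{id}$ you do not really need Proposition~\ref{reduced-words-existence} or any uniqueness-of-path-type statement. Since both $\Phi$ and $\Psi$ are group homomorphisms, the composite $\Psi\circ\Phi$ is a homomorphism $\pi_1(\cal G,v_0)\to\pi_1(\cal G,v_0)$, and its value on $w$ is automatically independent of the chosen word representative; the telescoping computation on any connected closed word (reduced or not) then shows it equals $w$. Invoking the reduced-word machinery is harmless but unnecessary.
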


It follows immediately that, for a graph-of-groups $\cal{G}$ with trivial edge groups, the product $*(G_{v})_{v\in V(\cal{G})}$ is free and forms a free factor of $\pi_{1}(\cal{G})$, moreover the disjoint union of basis of each vertex group $\bigcup\limits_{v\in V(\cal{G})}\cal{B}_v$ is a subset of the basis of $\pi_{1}(\cal{G})$.

\begin{defn}\label{graphofgroupsiso}[Graph-of-groups Isomorphisms]

Let $\cal{G}_{1}$, $\cal{G}_{2}$ be two graphs of groups. 
Denote $\Gamma_{1}=\Gamma(\cal{G}_{1})$ and $\Gamma_{2}=\Gamma(\cal{G}_{2})$.
An isomorphism $H: \cal{G}_{1} \rightarrow \cal{G}_{2}$ 
is a tuple of the form
$$H=(H_{\Gamma}, (H_{v})_{v \in V(\Gamma_{1})}, (H_{e})_{e \in E(\Gamma_{1})}, (\delta(e))_{e \in E(\Gamma_{1})})$$
where
\begin{enumerate}
\item $H_{\Gamma}: \Gamma_{1} \rightarrow \Gamma_{2}$ is a graph isomorphism;
\item $H_{v}: G_{v} \rightarrow G_{H_{\Gamma}(v)}$ is a group isomorphism, for any $v \in V(\Gamma_{1})$;
\item $H_{e}=H_{\bar{e}}: G_{e} \rightarrow G_{H_{\Gamma}(e)}$ is a group isomorphism, for any $e \in E(\Gamma_{1})$;
\item for every $e \in E(\Gamma_{1})$, the {\it correction term} $\delta(e) \in G_{\tau(H_{\Gamma(e)})}$ is an element such that
    $$H_{\tau(e)}f_{e}=ad_{\delta(e)}f_{H_{\Gamma}(e)}H_{e}.$$
\end{enumerate}
\end{defn}

\begin{rem}
A graph-of-groups isomorphism 
$H: \cal{G}_{1} \rightarrow \cal{G}_{2}$ 
induces an isomorphism 
$H_{*}: \Pi(\cal{G}_{1}) \rightarrow \Pi(\cal{G}_{2})$ 
defined on the generators by:
\begin{enumerate}
\item[] $H_{*}(g) = H_{v}(g)$, for $g \in G_{v}$, $v \in V(\Gamma_{1})$;
\item[] $H_{*}(t_{e}) = \delta(\bar{e}) t_{H_{\Gamma}(e)} \delta(e)^{-1}$, for $e \in E(\Gamma_{1})$.
\end{enumerate}

It's easy to verify by computation that $H_{*}$ preserves the relations $t_{e}t_{\bar{e}}=1$ for any $e \in E(\cal{G})$ and $f_{\bar{e}}(g)=t_{e}f_{e}(g)t_{e}^{-1}$, for any $e\in E(\cal{G})$ and $g \in G_{e}$.

Furthermore, the restriction of $H_{*}$ 
to
$\pi_{1}(\cal{G}_{1},v)$, where $v \in V(\Gamma_{1})$, is also an isomorphism, denoted by $H_{* v}: \pi_{1}(\cal{G}_{1},v) \rightarrow \pi_{1}(\cal{G}_{2}, H_{\Gamma}(v))$.
\end{rem}

\begin{rem}
As in \cite{CL02}, we define the {\it outer isomorphism} induced by a group isomorphism $f: G_{1} \rightarrow G_{2}$ as the equivalence class 
$$\hat{f}=\{ad_{g}f : G_{1} \rightarrow G_{2} \mid g \in G_{2}\}.$$
Hence $H_{*v}$ induces an outer isomorphism $\hat{H}_{*v}: \pi_{1}(\cal{G}_{1},v) \rightarrow \pi_{1}(\cal{G},H_{\Gamma}(v))$.

Observe that when choosing a different vertex $v_{1}$ as basepoint, we may choose a word $W \in \Pi(\cal{G}_{1})$ 
with underlying 
path from $v_{1}$ to $v$ 
to obtain the following commutative diagram:
\[
\begin{CD}
\pi_{1}(\cal{G}_{1},v) @>H_{*v}>> \pi_{1}(\cal{G}_{2}, H_{\Gamma}(v)) \\
@Vad_{W} VV @Vad_{H_{*}(W)} VV \\
\pi_{1}(\cal{G}_{1},v_{1})@>>H_{*v_{1}}> \pi_{1}(\cal{G}_{2}, H_{\Gamma}(v_{1}))
\end{CD}
\]

By Lemma 2.2 and Lemma 3.10 in \cite{CL02}, $\hat{H}_{*v}$ determines an outer isomorphism $\hat{H}_{*v_{1}}: \pi_{1}(\cal{G}_{1}, v_{1}) \rightarrow \pi_{1}(\cal{G}_{2}, H_{\Gamma}(v_{1}))$. 

In this sense, we observe that $H: \cal{G}_{1} \rightarrow \cal{G}_{2}$ induces an outer isomorphism $\hat{H}: \pi_{1}(\cal{G}_{1}) \rightarrow \pi_{1}(\cal{G}_{2})$ which doesn't depend on the choice of basepoint.
\end{rem}

\subsection{H-conjugation}
\label{Hconj}

${}^{}$

We recall in this subsection some basic definitions and properties about the notion of $H$-conjugation. Contrary to the previous subsection, which only contained standard definitions and notation, the content of this subsection has been defined in \cite{KY01} and to our knowledge didn't exist previously.
 
\begin{defn}
For a graph-of-groups automorphism $H: \cal{G} \rightarrow \cal{G}$
two reduced words $w_{1}, w_{2} \in \Pi(\cal{G})$ are said to be 
{\it {H-conjugate}} to each other 
if there exists a reduced word $w\in \Pi(\cal{G})$ 
such that $w_{1}=w w_{2} H_{*}(w)^{-1}$.
\end{defn}

It's easy to show that $H$-conjugation is a well-defined equivalence relation on $\Pi(\cal{G})$.

Denote by $[w]_{H}$ the set which consists of all elements in $\Pi(\cal{G})$ that are $H$-conjugate to $w$. 
We call $[w]_{H}$ the {\it H-conjugacy class} of $w$.

Recall that the {\it {path length}} of a word $w \in \Pi(\cal{G})$ equals to the number of edges the path underlying $w$ crosses. 
We denote the path length of $w$ by 
$\| w \|_{\cal{G}}$.

\begin{defn}
A reduced word $w \in \Pi(\cal{G})$ is said to be {\it H-minimal} if it has the shortest path length among its $H$-conjugates.
More specifically, 
if $w$ is $H$-minimal, 
then for every $w_{0} \in \Pi(\cal{G})$, $\| w_{0} w H_{*}(w_{0})^{-1}\|_{\cal{G}} \geq \| w \|_{\cal{G}}$. 
\end{defn}

Since $\| w \|_{\cal{G}}$ is a natural number, 
one has that every reduced word $w \in \Pi(\cal{G})$ has a $H$-conjugate which is $H$-minimal.

Therefore there exists a well defined {\it H-length}:
\begin{align*}
\| w \|_{\cal{G},H}=\min \{\| g w H_{*}(g)^{-1} \|_{\cal{G}} \mid g \in \Pi(\cal{G})\}.
\end{align*} 

Moreover, $\| w \|_{\cal{G},H}=\| w \|_{\cal{G}}$ if and only if $w$ is H-reduced. 

\begin{defn}
\label{Hreduced-defn}
A reduced word $w \in \Pi(\cal{G})$ is called {\it {H-zero}} if and only if its $H$-length equals to zero, i.e. $\| w \|_{\cal{G},H}=0$. 
\end{defn}

It also can be shown that $w\in \Pi(\cal{G})$ is $H$-minimal if and only if it is $H$-reduced, as defined below:

\begin{defn}\label{Hreduced}
Let $w \in \Pi(\cal{G})$ be a reduced word in the form of
$w=r_{0}t_{1}r_{1}...r_{q-1}t_{q}r_{q}$,
$w$ is said to be {\it $H$-reduced} if its cannot be shortened by the {\it elementary operation}
$w \mapsto w_{1} = (r_{0}t_{1})^{-1} w H_{*}(r_{0}t_{1})$, 
i.e. $\| w \|_{\cal{G}} = \| w_1 \|_{\cal{G}}$.
\end{defn}

\begin{rem}

If $w^{\prime}=r_{0}t_{1}r_{1}...r_{k-1}t_{k}r_{k} \in [w]_{H}$ is $H$-reduced, then
\begin{align*}
& w^{\prime}_{1}  = (r_{0}t_{1})^{-1} w^{\prime} H_{*}(r_{0}t_{1})\\
& \ \ \ \ \ \ \  ...\\
& w^{\prime}_{i}  = (r_{i-1}t_{i})^{-1} w^{\prime}_{i-1} H_{*}(r_{i-1}t_{i})\\
& \ \ \ \ \ \ \ ... \\
& w^{\prime}_{k}  = (r_{k-1}t_{k})^{-1} w^{\prime}_{k-1} H_{*}(r_{k-1}t_{k}) \\
\end{align*}
are also $H$-reduced.

Moreover, since $H_{*}$ preserves the path lengths of reduced words, we also have $H_{*}(w^{\prime})=w^{\prime -1} w^{\prime } H_{*}(w^{\prime})$, $H_{*}^{-1}(w^{\prime})=H_{*}^{-1}(w^{\prime}) w^{\prime } w^{\prime -1}$ are $H$-reduced.


Hence $\bigcup\limits_{i=-\infty}^{+\infty}\{ H_{*}^{i}(w^{\prime}), H_{*}^{i}(w^{\prime}_{1}),...,H_{*}^{i}(w^{\prime}_{k}) \}$ covers all path types of $H$-reduced words in $[w]_{H}$.

\end{rem}

\begin{lem}\label{Hconj1}
Given a graph-of-groups isomorphism $H: \cal{G} \rightarrow \cal{G}$ which acts trivially on the underlying graph $\Gamma=\Gamma(\cal{G})$. 
Choose an arbitary vertex $v_0 \in V(\Gamma)$ as basepoint. 
For every closed reduced word  $W \in \pi_{1}(\cal{G},v_0)$, 
there exist a vertex $v_1 \in V(\Gamma)$ and a reduced word $\gamma \in \Pi(\cal{G})$ which underlies a path from $v_0$ to $v_1$ such that $\gamma^{-1} W H(\gamma) \in \pi_{1}(\cal{G}, v_1)$ is $H$-reduced. 
\end{lem}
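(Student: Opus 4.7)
The plan is to induct on the path length $\|W\|_{\cal{G}}$, iterating the elementary shortening operation of Definition~\ref{Hreduced} and accumulating the successive conjugators into a single word $\gamma$. The base case $\|W\|_{\cal{G}} = 0$ is immediate: then $W$ lies in $G_{v_0}$, no elementary operation applies, and $W$ is tautologically $H$-reduced, so one sets $v_1 = v_0$ and $\gamma = 1$.

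For the inductive step, write $W = r_0 t_1 r_1 \cdots t_q r_q$ in reduced form. If $W$ is already $H$-reduced, again take $v_1 = v_0$ and $\gamma = 1$. Otherwise, set $W_1 := (r_0 t_1)^{-1} W H_*(r_0 t_1)$. The key use of the hypothesis $H_\Gamma = \mathrm{id}$ is that both $r_0 t_1$ and $H_*(r_0 t_1)$ underlie the same edge path from $v_0$ to $v_1' := \tau(e_1)$, so $W_1$ is a closed connected word at $v_1'$, and since $W$ fails to be $H$-reduced we have $\|W_1\|_{\cal{G}} < \|W\|_{\cal{G}}$. Applying the inductive hypothesis to $W_1 \in \pi_1(\cal{G}, v_1')$ yields a vertex $v_1 \in V(\Gamma)$ and a reduced word $\gamma' \in \Pi(\cal{G})$ underlying a path from $v_1'$ to $v_1$ such that $\gamma'^{-1} W_1 H_*(\gamma') \in \pi_1(\cal{G}, v_1)$ is $H$-reduced.

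To conclude, let $\gamma$ be a reduced representative (provided by Proposition~\ref{reduced-words-existence}) of the element $r_0 t_1 \gamma' \in \Pi(\cal{G})$. Since the defining relations of $\Pi(\cal{G})$ never alter the initial or terminal vertex of the underlying path, $\gamma$ still underlies a path from $v_0$ to $v_1$. A direct substitution gives
\[
\gamma^{-1} W H_*(\gamma) = \gamma'^{-1} (r_0 t_1)^{-1} W H_*(r_0 t_1) H_*(\gamma') = \gamma'^{-1} W_1 H_*(\gamma'),
\]
which is the desired $H$-reduced element of $\pi_1(\cal{G}, v_1)$.

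The main subtlety I anticipate is not in the induction itself, which is forced by the strict length-reducing property of the elementary operation on non-$H$-reduced words, but in two bookkeeping verifications: first, that $W_1$ is again a closed connected word, which is exactly where the assumption $H_\Gamma = \mathrm{id}$ enters; and second, that the concatenation $r_0 t_1 \gamma'$ admits a reduced representative whose underlying path still runs from $v_0$ to $v_1$. Both are routine consequences of Proposition~\ref{reduced-words-existence} together with the trivial action of $H$ on $\Gamma$.
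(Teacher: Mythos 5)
Your proof is correct and takes essentially the same approach as the paper: it repeatedly applies the elementary shortening operation of Definition~\ref{Hreduced}, uses the hypothesis $H_\Gamma = \mathrm{id}$ to ensure that $r_0t_1$ and $H_*(r_0t_1)$ underlie the same edge path (so each intermediate word stays closed), and terminates by induction on path length. The paper states the terminating process more tersely rather than as a formal induction, but the argument is the same.
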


\begin{proof}
In general, 
for every reduced word $W \in \Pi(\cal{G})$,
there exists $\gamma \in \Pi(\cal{G})$ such that $\gamma^{-1} W H(\gamma)$ is $H$-reduced. 
In the case where $H$ acts trivially on the graph $\Gamma$, 
the reduced words $\gamma$ and $H(\gamma)$ underly exactly the same edge path.
Hence the word $\gamma^{-1} W H(\gamma)$ is a closed word issued at the ternimal vertex of $\gamma$. 

Moreover, it derives from Section~\ref{Hconj} that we can find such an $H$-reduced word with path type that is a subsequence of the path type of $W$ by applying the elementary operation defined in Definition~\ref{Hreduced}.
\end{proof}

\section{Dehn twists}

\subsection{Classical Dehn twist}

\begin{defn}\label{classical dehn twist} [Classical Dehn twist] 
A {\it Dehn twist} $D$ of a graph-of-groups $\cal{G}$ is a graph-of-groups automorphism $D: \cal G \to \cal G$ which satisfies
(where $\Gamma = \Gamma(\cal G)$ denotes as usually the underlying graph):

\begin{enumerate}
\item $D_{\Gamma} = id_{\Gamma}$;
\item $D_{v} = id_{G_{v}}$, for all $v \in V(\Gamma)$;
\item $D_{e} = id_{G_{e}}$, for all $e \in E(\Gamma)$;
\item for each $G_{e}$, there is an element $\gamma_{e} \in Z(G_{e})$ such that the correction term satisfies $\delta(e)=f_{e}(\gamma_{e})$, where $Z(G_{e})$ denotes the center of $G_{e}$.
\end{enumerate}

We denote a Dehn twist defined as above by $D=D(\cal{G},(\gamma_{e})_{e \in E(\cal{G})})$
\end{defn}

\begin{rem}[Twistor]
Given a Dehn twist $D=D(\cal{G},(\gamma_{e})_{e \in E(\cal{G})})$, 
we define the {\it twistor} of an edge $e\in E(\Gamma)$ by setting
$z_{e}=\gamma_{\bar{e}}\gamma_{e}^{-1}$.
Then for any edge $e$ 
we have 
$z_{e} \in Z(G_{e})$ and $z_{\bar{e}}=\gamma_{e} \gamma_{\bar{e}}^{-1}=z^{-1}_{e}$.

\end{rem}

\begin{rem}
\label{rem-twistor}
The induced automorphism $D_{*} :\Pi(\cal{G}) \rightarrow \Pi(\cal{G})$ 
is defined on generators as
follows:
\begin{enumerate}
\item[] $D_{*}(g)=g$, for $g \in G_{v}$, $v\in V(\Gamma)$;
\item[] $D_{*}(t_{e}) = t_{e}f_{e}(z_{e})$, for every $e \in E(\Gamma)$.
\end{enumerate}

In particular, the induced automorphism on the fundamental group, 
$D_{*v}: \pi_{1}(\cal{G}, v) \rightarrow \pi_{1}(\cal{G},v)$ where $v \in V(\Gamma)$, 
is called a {\it Dehn Twist automorphism}.
\end{rem}

\begin{defn}\label{Dehntwist}
In general, 
a group automorphism $\phi : G \rightarrow G$ is said to be a {\it Dehn twist automorphism} 
if it is represented by a graph-of-groups Dehn twist. 
More precisely, 
there exists a graph-of-groups $\cal G$, 
a vertex $v$ of $\Gamma(\cal G)$,
a Dehn twist $D: \cal{G} \rightarrow \cal{G}$, 
and an isomorphsim $\theta: G \rightarrow \pi_{1}(\cal{G},v)$ such that $\phi=\theta^{-1} \circ D_{*v} \circ \theta$

In this case the induced outer automorphism $\hat{\phi}: G \rightarrow G$ is called a {\it Dehn twist outer automorphism}.
\end{defn} 

\begin{rem}
\label{some-D-lifts}
The reader may notice the following subtlety in the above definitions: 

Because of the role of the base point $v$ in Definition \ref{Dehntwist}, it may well occur that two automorphisms $\phi_1$ and $\phi_2$ of a group $G$ define the same outer automorphism $\hat \phi_1 = \hat \phi_2$ which is a Dehn twist outer automorphism, but only $\phi_1$ is a Dehn twist automorphism, while $\phi_2$ isn't.
\end{rem}

\begin{prop}[Proposition 5.4 \cite{CL02}]
Suppose
$\cal{G}$ is a graph-of-groups which satisfies that 
for every edge $e$ there is an element $r_{e} \in G_{\tau(e)}$ with 
$$f_{e}(G_{e}) \cap r_{e}f_{e}(G_{e})r_{e}^{-1}=\{1\}.$$ 

Then two Dehn twists 
$D=(\cal{G},(\gamma_{e})_{e\in E(\cal{G})})$, 
$D^{\prime}=(\cal{G},(\gamma^{\prime}_{e})_{e \in E(\cal{G})})$ 
determine the same outer automorphism of $\pi_{1}(\cal{G})$ 
if and only if $z_{e}=z_{e}^{\prime}$ for all $e \in E(\Gamma)$.
\end{prop}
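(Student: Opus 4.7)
I would prove the two implications separately.

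For ($\Leftarrow$): if $z_e = z_e'$ for every edge $e$, then by Remark~\ref{rem-twistor} the maps $D_*$ and $D'_*$ agree on each generator $t_e$ of $\Pi(\cal{G})$ and both fix every vertex group pointwise, so $D_* = D'_*$ as automorphisms of $\Pi(\cal{G})$. Restricting to $\pi_{1}(\cal{G}, v)$ then gives the same automorphism, in particular the same outer automorphism.

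For ($\Rightarrow$), I would first reduce via the composition $\psi := D_* \circ (D'_*)^{-1}$. A direct calculation using $(D'_*)^{-1}(t_e) = t_e f_e(z_e')^{-1}$ shows that $\psi$ fixes every vertex group pointwise and satisfies $\psi(t_e) = t_e f_e(u_e)$, where $u_e := z_e (z_e')^{-1} \in Z(G_e)$ and $u_{\bar e} = u_e^{-1}$. In particular $\psi = D''_*$ for the Dehn twist $D'' = D(\cal{G}, (\gamma_e''))$ with twistors $u_e$ (one may set $\gamma_e'' = 1$, $\gamma_{\bar e}'' = u_e$ on a chosen orientation). Since $\widehat{D_*} = \widehat{D'_*}$, the restriction $\psi|_{\pi_{1}(\cal{G},v)}$ is inner, say equal to $\mathrm{ad}_w$ for some $w \in \pi_{1}(\cal{G},v)$. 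It thus suffices to show that a Dehn twist of $\cal{G}$ whose induced automorphism on $\pi_{1}(\cal{G},v)$ is inner must have all twistors equal to $1$.

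I would establish this one edge at a time. Fix $e$ and, after a basepoint change via a connecting path, arrange $v = \tau(e) =: v_2$; write $v_1 := \tau(\bar e)$. Let $r_e \in G_{v_2}$ and $r_{\bar e} \in G_{v_1}$ be the elements supplied by the hypothesis applied to $e$ and to $\bar e$ respectively; the hypothesis in particular forces $r_e \notin f_e(G_e)$ and $r_{\bar e} \notin f_{\bar e}(G_e)$. The closed word $W := r_e \, t_{\bar e} \, r_{\bar e} \, t_e \in \pi_{1}(\cal{G}, v_2)$ is then cyclically reduced, and a direct computation gives the likewise cyclically reduced expression
\[
\psi(W) \; = \; r_e \cdot t_{\bar e} \cdot f_{\bar e}(u_e^{-1})\, r_{\bar e} \cdot t_e \cdot f_e(u_e),
\]
of the same cyclic path type. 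Since $\psi(W) = w W w^{-1}$, these two cyclically reduced words represent conjugate elements of $\pi_{1}(\cal{G}, v_2)$; applying the graph-of-groups normal-form/conjugacy theory (Proposition~\ref{reduced-words-existence} together with its cyclic refinement, cf.~\cite{Bass}) produces edge-group elements $g_1, g_2 \in G_e$ relating the two sides, and rearranging the resulting equations yields $r_e f_e(u_e) r_e^{-1} \in f_e(G_e)$. Combined with $f_e(u_e) \in f_e(G_e)$, the hypothesis then forces $r_e f_e(u_e) r_e^{-1} \in f_e(G_e) \cap r_e f_e(G_e) r_e^{-1} = \{1\}$, whence $f_e(u_e) = 1$ and $u_e = 1$ by injectivity of $f_e$. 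The delicate point I expect is the bookkeeping in the cyclic normal-form comparison; the malnormality-style hypothesis is tailored precisely so that it forces each twistor $u_e$ to vanish independently.
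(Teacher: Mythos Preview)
The paper does not give its own proof of this proposition: it is quoted verbatim as Proposition~5.4 of \cite{CL02} and used as a black box, so there is nothing in the present paper to compare your argument against.

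That said, a brief assessment: your $(\Leftarrow)$ direction is correct and immediate from Remark~\ref{rem-twistor}. For $(\Rightarrow)$, the reduction to a single Dehn twist $\psi$ with twistors $u_e = z_e (z_e')^{-1}$ that is inner on $\pi_1(\cal G, v)$ is clean, and the idea of testing against the cyclically reduced word $W = r_e\, t_{\bar e}\, r_{\bar e}\, t_e$ is the natural one (note that you implicitly need $G_e \neq \{1\}$ to ensure $r_e \notin f_e(G_e)$ and $r_{\bar e} \notin f_{\bar e}(G_e)$; when $G_e$ is trivial there is nothing to prove since $u_e \in Z(G_e) = \{1\}$). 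The step you flag as delicate --- extracting $r_e f_e(u_e) r_e^{-1} \in f_e(G_e)$ from the cyclic conjugacy normal form --- is indeed where the work lies: you must invoke the full conjugacy theorem for graphs of groups (two cyclically reduced conjugate words of the same length differ by a cyclic permutation followed by conjugation by an edge-group element), and then carefully track the two possible cyclic permutations of the path type $(\bar e, e)$. This is exactly the kind of argument carried out in \cite{CL02}; if you want a self-contained proof you should write out both permutation cases explicitly rather than leaving it at ``rearranging the resulting equations''.
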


This proposition shows that in many situations a Dehn twist on a given graph-of-groups is uniquely determined by its twistors.
Thus sometimes we may define a Dehn twist by its twistors 
$(z_{e})_{e\in E(\Gamma)}$ 
(for each $e\in E(\Gamma)$, $z_{e} \in Z(G_{e})$ and $z_{\bar{e}}=z_{e}^{-1}$). 
In this case, 
we may conversely define:
\begin{equation*}
\gamma_{e}=\left\{
\begin{aligned}
& z_{e}^{-1}, & e\in E^{+}(\Gamma) \\
& 1,  & e\in E^{-}(\Gamma).
\end{aligned}
\right.
\end{equation*}

\subsection{General and partial Dehn twists}
${}^{}$

As discussed in \cite{KY01}, 
we can define a Dehn twist in a slightly more general context 
by replacing the last condition of 
Definition~\ref{classical dehn twist} by the following:
\begin{enumerate}
\item [(4*)] the correction term $\delta(e) \in C(f_{e}(G_{e}))$, where $C(f_{e}(G_{e}))$ denotes the centeralizer of $f_{e}(G_{e})$ in $G_{\tau(e)}$, for all $e \in E(\Gamma)$. 
\end{enumerate}

We call a graph-of-groups automorphism which satisfies conditions $(1)-(3)$ in Definition~\ref{classical dehn twist} and  $(4*)$ a {\it general Dehn twist}.

It's shown in \cite{KY01} that Dehn twists defined in either, the classical or the  general version, are equivalent in the sense that: 
(i) every classical Dehn twist is a general Dehn twist; 
(ii) every general Dehn twist has a naturally corresponding classical Dehn twist which induces same outer automorphism.

On other hand,
if $\cal{G}$ is a graph-of-groups with trival edge groups, 
and $H: \cal{G} \rightarrow \cal{G}$ is an automorphism which acts trivially on the graph and vertex groups, then
it follows immediately from the above definitions that $H$ induces Dehn twist automorphisms, for any choice of the family of correction terms.

\begin{defn}
\label{partial-Dehn-twists}
(a)
A {\it partial Dehn twist relative to a subset of vertices} 
$\cal{V}=\{v_1, v_2, ..., v_m\}$ of $\cal G$ is  
a graph-of-groups automorphism $H: \cal{G} \rightarrow \cal{G}$ such that
\begin{enumerate}
\item[(1)] for every $e \in E(\cal{G})$ with $\tau(e) \in \cal{V}$, 
the edge group $G_e$ is trivial;
\item[(2)] $H$ satisfies all conditions of a general Dehn twist 
except at $v_i \in \cal{V}$. 
That is to say, 
any of vertex group automorphism $H_{v_i}$ with
$v_i \in \cal{V}$ may not be trivial.
\end{enumerate} 

\smallskip
\noindent
(b)
More specifically, a {\it partial Dehn twist relative to a family of local Dehn twists} is a partial Dehn twist relative to a subset of vertices $\cal V$ of $\cal G$, and at any vertex $v \in \cal V$ the vertex group automorphism $H_v: G_v \to G_v$ is a Dehn twist automorphism.
\end{defn}

\begin{rem}
\label{partial-D-rel-D}
If some automorphism $\phi: G \to G$ is represented by a partial Dehn twist $H: \cal G \to \cal G$ relative to a family of vertices $v_i \in \cal V \subset V(\Gamma(\cal G))$, where every vertex group automorphism $H_{v_i}: G_{v_i} \to G_{v_i}$ induces an outer Dehn twist automorphism $\hat H_{v_i}$, 
then $\phi$ is a partial Dehn twist relative to a family of local Dehn twists. 

This can be seen through replacing $H$ by $J \circ H$, where the graph-of-groups automorphism $J: \cal G \to \cal G$ is the identity on all edge groups and on all vertex groups for vertices outside $\cal V$, and an inner automorphism on all $G_{v_i}$ if $v_i \in \cal V$. Here the correction terms $\delta^J_e$ for edges $e$ with terminal vertex $\tau(e) \notin \cal V$ are trivial, while for edges e with $\tau(e) \in \cal V$ they are properly chosen to ``undo'' the inner automorphism $J_{\tau(e)}$ on $G_{\tau(e)}$,
so that for any $v \notin \cal V$ the induced automorphism $J_{*v}: \pi_1(\cal G, v) \to \pi_1(\cal G, v)$ is the identity map (and thus for any $v \in \cal V$ the automorphism $J_{*v}$ is an inner automorphism).
See Section~2.4 in \cite{KY01} for more details.
\end{rem}

\subsection{Efficient Dehn Twist}
\label{efficient-subsection}

${}^{}$

Unless otherwise stated, 
in this subsection we always assume 
$D: \cal{G} \rightarrow \cal{G}$ is a Dehn twist defined in the classical meaning.
We write $D = D(\cal{G}, (z_{e})_{e \in E(\cal{G})})$.


Two edges $e_{1}$ and $e_{2}$ with common terminal vertex $v$ are called

\begin{enumerate}

\item[$\diamond$]
{\it {positively bonded}}, if there exist $n_1,n_2 \geq 1$ such that $f_{e_{1}}(z_{e_{1}}^{n_1})$ and  $f_{e_{2}}(z_{e_{2}}^{n_2})$ are conjugate in $G_{v}$.

\item[$\diamond$]
{\it {negative bonded}}, if there exist $n_1 \geq 1$, $n_2 \leq 1$ such that $f_{e_{1}}(z_{e_{1}}^{n_1})$ and  $f_{e_{2}}(z_{e_{2}}^{n_2})$ are conjugate in $G_{v}$.

\end{enumerate}

For the rest of this subsection, we always assume for a graph-of-groups $\cal{G}$ its fundamental group $\pi_{1}(\cal{G})$ is free and of finite rank $n \geq 2$.
This implies, by definition of a classical Dehn twist, that any edge $e$ with non-trivial twistor $z_e$ has edge group $G_e \cong \Z$.

\begin{defn}[Efficient Dehn twist \cite{CL02}]
\label{efficient-D-twists-defn}
A Dehn twist $D=D(\cal{G},(z_{e})_{e \in E(\cal{G})})$ is said to be {\it efficient} if :

The graph-of-groups $\cal{G}$ satisfies
\begin{enumerate}
\item $\cal{G}$ is {\it minimal}: if $v = \tau(e)$ is a valence-one vertex, then the edge homomorphism $f_{e}: G_{e}\rightarrow G_{v}$ is not surjective.
\item There is no {\it invisible vertex}: there is no valence-two vertex $v=\tau(e_{1})=\tau(e_{2})$ $(e_{1} \neq e_{2})$ such that both edge maps $f_{e_{i}}: G_{e_{i}} \rightarrow G_{v}$ $(i=1,2)$ are surjective. 
\item No {\it proper power}: if $r^{p} \in f_{e}(G_{e})$ $(p \neq 0)$ then $r \in f_{e}(G_{e})$, for all $e \in E(\Gamma)$.
\end{enumerate}

And together with the collection of twistors $(z_{e})_{e \in E(\Gamma)}$, it also satisfies:
\begin{enumerate}
\item[(4)] No {\it unused edge}: for every $e \in E(\Gamma)$, the twistor $z_{e} \neq 1$ (or equivalently $\gamma_{e} \neq \gamma_{\bar{e}}$).
\item[(5)] If $v= \tau(e_{1})=\tau(e_{2})$, then $e_{1}$ and $e_{2}$ are not positively bonded.
\end{enumerate}
\end{defn}

The following has been shown in \cite{CL02}:

\begin{prop}
\label{vertex-rank}
For every vertex $v \in V(\cal G)$ of an efficient Dehn twist $D: \cal G \to \cal G$ the group $G_v$ has rank
$$\rm{rk} (G_v) \geq 2 \, .$$
\qed
\end{prop}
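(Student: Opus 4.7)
The plan is to argue by contradiction: fix a vertex $v$ and suppose $\mathrm{rk}(G_v) \leq 1$. First I would eliminate the possibility that $G_v$ is trivial. Since $\pi_1(\cal G)$ has rank $\geq 2$ and $\Gamma(\cal G)$ is connected, either $\Gamma$ reduces to $\{v\}$ — in which case $G_v = \pi_1(\cal G)$ already has rank $\geq 2$ and we are done — or $v$ has an incident edge $e$, for which the ``no unused edge'' condition forces $z_e \neq 1$ and hence $G_e$ non-trivial (in fact $G_e \cong \Z$, as noted just above Definition~\ref{efficient-D-twists-defn}). Injectivity of $f_e$ then precludes $G_v = \{1\}$. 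So $G_v \cong \Z$, and I write $G_v = \langle r \rangle$ for the rest of the argument.

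Next I case-split on the valence of $v$. In the valence-one case with incident edge $e$, the minimality condition forces $f_e(G_e)$ to be a proper subgroup of $\langle r \rangle$, so $f_e(G_e) = \langle r^k \rangle$ with $k \geq 2$; then $r^k \in f_e(G_e)$ while $r \notin f_e(G_e)$, which directly contradicts the ``no proper power'' axiom. In the valence-two case with incident edges $e_1 \neq e_2$, the ``no invisible vertex'' condition forces at least one of $f_{e_1}, f_{e_2}$ to be non-surjective, and the same proper-power argument yields the contradiction.

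The remaining case is valence $\geq 3$. Pick three distinct edges $e_1, e_2, e_3$ with $\tau(e_i) = v$. Since $z_{e_i} \neq 1$ and $f_{e_i}$ is injective, each image $f_{e_i}(z_{e_i})$ is a non-trivial element of $G_v \cong \Z$, so $f_{e_i}(z_{e_i}) = r^{a_i}$ for some integer $a_i \neq 0$. By the pigeonhole principle two of the $a_i$ share a common sign; say $a_1, a_2 > 0$ (the negative case being symmetric). Then with $n_1 = a_2 \geq 1$ and $n_2 = a_1 \geq 1$ we have
\[
f_{e_1}(z_{e_1}^{\,n_1}) \;=\; r^{a_1 a_2} \;=\; f_{e_2}(z_{e_2}^{\,n_2}),
\]
so $e_1$ and $e_2$ are positively bonded at $v$, contradicting efficiency condition (5).

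The only subtle step is this last one: one must exploit that in the abelian group $G_v \cong \Z$ ``conjugate'' collapses to ``equal'', and that opposite signs (which would only yield a negative bonding, not forbidden by efficiency) are ruled out as soon as three non-zero integers are available, which is exactly what valence $\geq 3$ provides. All other steps are direct applications of the individual axioms of efficiency, and I do not foresee any hidden difficulty.
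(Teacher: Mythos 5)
Your argument is correct, and it is the natural one: each of the five efficiency axioms is used exactly once, the case split by valence is exhaustive, and the pigeonhole step in the valence-$\geq 3$ case is the right way to sidestep negative bonding, which efficiency does not forbid. The paper itself does not prove this proposition but merely cites \cite{CL02} with a \qed; to the best of my recollection your case analysis matches the structure of the argument in that reference, so there is nothing to reconcile.

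Two very small points worth being explicit about if you write this up formally. First, in the trivial-vertex-group step you should say outright that $G_e$ is free (it embeds in the free group $G_v$, which embeds in the free $\pi_1\cal G$) and has nontrivial center since $z_e \in Z(G_e)\setminus\{1\}$, which is what forces $G_e \cong \Z$; the paper states this as a remark just before Definition~\ref{efficient-D-twists-defn}, so you can simply cite it. Second, in the valence-$\geq 3$ case your three chosen edges may include a pair $e,\bar e$ coming from a loop at $v$; your application of efficiency condition~(5) is still legitimate, since that condition as stated applies to any two distinct edges with common terminal vertex, but it is worth flagging that no hypothesis $e_1 \neq \bar e_2$ is needed or assumed.
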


\begin{prop}
\label{fixed-elements}
Let $D: \cal G \to \cal G$ be an efficient Dehn twist, and let $v \in V(\cal G)$ be any vertex.

\smallskip
\noindent
(a)
A conjugacy class $[w]$ of $\pi_1 (\cal G, v)$ is fixed by $\hat D_{*v}$ if and only if $w$ has 
$H$-length
$$\| w \| _{\cal{G},H}= 0 \, .$$

\smallskip
\noindent
(b)
An element $w \in \pi_1 (\cal G, v)$ is fixed by $D_{*v}$ if and only if $w \in G_v$.
\qed
\end{prop}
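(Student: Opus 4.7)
The plan is to prove part (b) directly from the reduced normal form, and to bootstrap a similar argument for (a).

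For (b), I would write a $D_{*v}$-fixed element $w \in \pi_1(\cal G,v)$ in reduced form $w = r_0 t_{e_1} r_1 \cdots t_{e_q} r_q$. The formula $D_*(t_e) = t_e f_e(z_e)$ from Remark~\ref{rem-twistor} gives
\[
D_*(w) = r_0 t_{e_1} \bigl(f_{e_1}(z_{e_1}) r_1\bigr) t_{e_2} \cdots t_{e_q} \bigl(f_{e_q}(z_{e_q}) r_q\bigr),
\]
which is again a reduced word of the same path type, since each inserted factor $f_{e_i}(z_{e_i})$ lies in the subgroup $f_{e_i}(G_{e_i})$ and hence does not affect the membership condition distinguishing reduced from non-reduced words. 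Applying the uniqueness of reduced representatives (Proposition~\ref{reduced-words-existence}(3)) together with the standard Bass--Serre compatibility criterion for equality of reduced words produces elements $c_i \in G_{e_i}$ linking the two spellings. The boundary conditions force $c_1 = 1$ and $c_q = z_{e_q}^{-1}$, while the middle equations take the form $f_{e_i}(z_{e_i} c_i) = r_i\, f_{\bar e_{i+1}}(c_{i+1})\, r_i^{-1}$. Assuming $q \geq 1$ and iterating these relations, one reaches a contradiction either with axiom~(4) of Definition~\ref{efficient-D-twists-defn} (if some $f_{e_i}(z_{e_i}) = 1$) or with axiom~(5) (if some $f_{e_i}(z_{e_i}^{n})$ is conjugate to $f_{\bar e_{i+1}}(z_{e_{i+1}}^{m})$ with $n,m \geq 1$). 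Hence $q = 0$ and $w \in G_v$.

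The easy direction of (a) is a direct calculation: if $g w D_*(g)^{-1} = r \in G_u$ for some vertex $u$, then applying $D_*$ to both sides and using $D_*(r) = r$ yields $D_*(w) = h w D_*(h)^{-1}$ with $h = D_*(g)^{-1} g$, and a short change-of-basepoint argument (moving along the path underlying $g$) translates this $H$-conjugation into plain $\pi_1(\cal G,v)$-conjugation, proving that $[w]$ is $\hat D_{*v}$-fixed.

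The hard direction is the main obstacle. Given $[w]$ fixed by $\hat D_{*v}$, I would pick an $H$-minimal representative $w^* \in [w]_H$ (equivalently $H$-reduced, Definition~\ref{Hreduced}) and argue by contradiction, supposing $\|w^*\|_{\cal G} > 0$. Using Lemma~\ref{Hconj1} together with a cyclic rotation via the elementary operation, one may replace $w^*$ by a cyclically reduced, $H$-reduced representative. The $\hat D_{*v}$-fixedness then gives $D_*(w^*) = h w^* h^{-1}$ for some reduced $h$; comparing both sides as reduced words through Bass--Serre uniqueness yields a system of compatibility relations analogous to those of part (b), now augmented by the edges contributed by $h$. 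Reading the junction between the first and last edge of $w^*$ cyclically and propagating along the middle produces, as in (b), a conjugacy $f_e(z_e^{n_1}) \sim f_{e'}(z_{e'}^{n_2})$ with $n_1,n_2 \geq 1$ and $e \neq e'$ sharing a terminal vertex, contradicting axiom~(5). The delicate point is the cyclic-rotation bookkeeping that brings the equation $D_*(w^*) = h w^* h^{-1}$ into the form where the Bass--Serre comparison can be applied; this is precisely where the $H$-conjugation formalism of Section~\ref{Hconj} is essential.
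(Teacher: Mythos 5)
The paper does not prove Proposition~\ref{fixed-elements}: it is stated with a \qed and attributed to \cite{CL02}, together with the surrounding Proposition~\ref{vertex-rank} and Theorem~\ref{Efficient}. So there is no in-paper proof to compare against, and your sketch has to stand on its own. It has two genuine gaps.

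For the ``easy'' direction of~(a), the computation ending in $D_*(w)=h\,w\,D_*(h)^{-1}$ with $h=D_*(g)^{-1}g$ proves nothing: an identity of that shape holds \emph{for every} $w\in\Pi(\cal G)$, since $w^{-1}\cdot w\cdot D_*(w^{-1})^{-1}=D_*(w)$ already exhibits $D_*(w)$ as a $D$-conjugate of $w$ with $h=w^{-1}$. The actual content of the implication $\|w\|_{\cal G,D}=0\Rightarrow[w]$ fixed lies in converting a $D$-conjugacy into an ordinary conjugacy, and the ``short change-of-basepoint argument'' does not achieve that: the element $r=g\,w\,D_*(g)^{-1}\in G_u$ certainly has $[r]_u$ fixed by $\hat D_{*u}$, but $[r]_u$ is not the image of $[w]_v$ under the canonical basepoint change along $g$ --- they differ by the factor $g^{-1}D_*(g)$, which is not inner in general. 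So the step you label as routine is in fact the crux, and your sketch leaves it unresolved.

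For the hard direction of~(b), writing out the Britton/Bass-Serre compatibility data $c_i\in G_{e_i}$ with $c_1=1$, $c_q=z_{e_q}^{-1}$ and $f_{e_i}(z_{e_i}c_i)=r_i f_{\bar e_{i+1}}(c_{i+1})r_i^{-1}$ is the right starting point, but the promised ``iteration to a contradiction'' is not carried out, and as stated the target contradiction is wrong: you invoke axiom~(5) with a conjugacy $f_{e_i}(z_{e_i}^{n})\sim f_{\bar e_{i+1}}(z_{e_{i+1}}^{m})$ where $n,m\geq 1$. But the two edges sharing the terminal vertex $\tau(e_i)=\tau(\bar e_{i+1})$ are $e_i$ and $\bar e_{i+1}$, whose twistors are $z_{e_i}$ and $z_{\bar e_{i+1}}=z_{e_{i+1}}^{-1}$. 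A conjugacy $f_{e_i}(z_{e_i}^{n})\sim f_{\bar e_{i+1}}(z_{\bar e_{i+1}}^{-m})$ with $n,m\geq 1$ is \emph{negative} bonding, which efficiency permits. To contradict axiom~(5) you must show that the propagated exponents of the $c_i$ come out with the sign opposite to $z_{e_{i+1}}$; that sign analysis is exactly the missing work, and it is not obvious because $c_i\in G_{e_i}\cong\Z$ is a priori an arbitrary nonzero integer, not a power of the twistor. The hard direction of~(a) is then stated only in outline and inherits the same unresolved issues. If you want to complete a proof along these lines I would suggest first consulting the argument in \cite{CL02} directly, since the sign/orientation bookkeeping is where all the difficulty hides.
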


It's shown in \cite{CL02} that every Dehn twist (classical or general) can be transformed algorithmically into an efficient Dehn twist, and furthermore, the latter is essentially unique:

\begin{thm}
\label{Efficient}
(1)
For every Dehn twist $D=D(\cal{G},(z_{e})_{e \in E(\Gamma)})$, there exists an efficient Dehn twist $D^{\prime}=D(\cal{G^{\prime}},(z_{e})_{e \in E(\Gamma^{\prime})})$ and an isomorphism between fundamental groups $\rho: \pi_{1}(\cal{G}, w) \rightarrow \pi_{1}(\cal{G}^{\prime}, w^{\prime})$, where $w\in V(\Gamma)$, $w^{\prime}\in V(\Gamma^{\prime})$ are properly chosen vertices, such that $\hat{D^{\prime}}\hat{\rho}=\hat{\rho}\hat{D}$.

\smallskip
\noindent
(2)
If $D'^{\prime}=D(\cal{G'^{\prime}},(z_{e})_{e \in E(\Gamma'^{\prime})})$ is a second such efficient Dehn twist, with respect to a analogous fundamental group isomorphism $\rho_0$, then there exists a graph-of-groups isomorphism $H: \cal G' \to \cal G''$ with $D'' = H D' H^{-1}$ and $\hat \rho_0 = \hat H \hat \rho$.

\qed
\end{thm}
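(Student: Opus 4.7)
My plan is to prove part (1) constructively, by exhibiting a finite sequence of elementary graph-of-groups modifications, each preserving the induced outer automorphism, that successively enforce the five conditions of Definition~\ref{efficient-D-twists-defn}. Conditions (1)--(3) on the underlying $\cal G$ are addressed by classical moves: a valence-one vertex $v = \tau(e)$ with $f_e$ surjective is eliminated by collapsing $e$, so that $G_v$ merges into $G_{\tau(\bar e)}$; an invisible valence-two vertex is removed by identifying the two surjective edge inclusions; the proper-power condition is enforced by enlarging $G_e$ to include all roots of $f_e(G_e)$ that already lie in $G_{\tau(e)}$. Each move comes with a canonical isomorphism of fundamental groups, and I would compose these to produce the required $\rho$.

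Next I would handle the twistor conditions (4)--(5). An edge $e$ with $z_e = 1$ is \emph{unused}, hence can be collapsed without altering $D_*$. For condition~(5), two positively bonded edges $e_1, e_2$ with $\tau(e_1) = \tau(e_2) = v$ and conjugate twistor-powers $f_{e_1}(z_{e_1}^{n_1})$ and $f_{e_2}(z_{e_2}^{n_2})$ are glued via an equivariant fold that lowers the edge count. Assigning a lexicographic complexity on the triple (number of edges, number of non-trivial roots required for~(3), number of positive bondings), one checks that each move strictly decreases the complexity, so the procedure terminates in finitely many steps and yields the efficient Dehn twist $D'$ together with the accompanying isomorphism $\rho$ satisfying $\hat{D'}\hat\rho = \hat\rho\hat D$.

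For part~(2), the uniqueness would be deduced from Proposition~\ref{fixed-elements}: the vertex groups of any efficient representation of $\hat D$ are, up to conjugacy, precisely the maximal subgroups of $\pi_1(\cal G)$ on which the restriction of $D_{*v}$ is the identity. Since these subgroups are canonical invariants of $\hat D$, they determine both the underlying graph and the vertex groups of $\cal G'$ and $\cal G''$ up to a graph-of-groups isomorphism $H$. The edge attaching maps are then forced by the pattern of intersections of the vertex subgroups, and the twistors are determined by the action of $\hat D$ on the stable letters, which is intrinsic to the outer automorphism; altogether this yields $D'' = H D' H^{-1}$ and $\hat\rho_0 = \hat H \hat\rho$.

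The main obstacle will be the interaction between the various efficiency moves: folding positively bonded edges can inadvertently reintroduce proper powers, create new bondings at a different vertex, or produce a valence-one vertex that violates minimality. For this reason the moves cannot be applied independently, and the termination argument needs a carefully chosen compound complexity that simultaneously dominates all such interferences. A secondary difficulty is consistently tracking the sign conventions $z_{\bar e} = z_e^{-1}$ and the orientations of edges across both efficient models, which is essential to make the final comparison $H$ in part~(2) well-defined.
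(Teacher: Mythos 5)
The paper does not actually prove Theorem~\ref{Efficient}: it is stated with a terminal \qed and attributed explicitly to Cohen--Lustig \cite{CL02}, so there is no in-house argument to compare against. Your sketch is an outline in the expected spirit, but it has two genuine gaps which, as written, keep it from being a proof.

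For part~(1), the broad strategy — apply moves that eliminate violations of conditions (1)--(5) and terminate by a complexity argument — is indeed the shape of the Cohen--Lustig construction. But the termination argument is the heart of the matter, and you do not supply one: you assert that a lexicographic complexity on (number of edges, number of roots needed for condition~(3), number of positive bondings) ``strictly decreases,'' while in the very next paragraph you acknowledge that the fold for positively bonded edges can reintroduce proper powers, new bondings, and new valence-one vertices. That is precisely the kind of interference that can make a naively chosen lexicographic triple fail to decrease. Without exhibiting a genuinely monotone quantity (Cohen--Lustig's argument rests on rank and Euler-characteristic considerations, not on the triple you propose) the termination claim is unsupported, and the whole of part~(1) collapses to an announcement of intent.

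For part~(2), the appeal to Proposition~\ref{fixed-elements} correctly identifies the conjugacy classes of vertex groups of any efficient representative as the maximal ``locally fixed'' subgroups, which is a useful invariant. But the subsequent step, ``the edge attaching maps are then forced by the pattern of intersections of the vertex subgroups,'' is not an argument. In a graph-of-groups with cyclic edge groups the conjugates of vertex groups only meet in conjugates of $\Z$, and recovering from this the underlying graph (including loops and parallel edges), the edge groups with their inclusions, the twistors, and then verifying that the resulting isomorphism $H$ conjugates $D'$ to $D''$ and intertwines $\rho$ with $\rho_0$, is a JSJ-type rigidity statement that needs a real proof. In Cohen--Lustig this is done through the theory of the dual very small $\R$-tree developed in \cite{CL01}; your version asserts the conclusion without supplying the mechanism. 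A secondary point: the theorem as invoked in this paper is meant to be effective (``transformed algorithmically''), which your sketch does not address at all.
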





We now return to the issue of $H$-conjugation as recalled in the last section, but with the specification that the graph-of-groups isomorphism $H: \cal G \to \cal G$ is a Dehn twist $D$, and we are interested in $H$-reduced or rather $D$-reduced elements as discussed in Definition \ref{Hreduced-defn} and Remark \ref{Hreduced}.

\begin{rem}[D-reduced v.s. cyclically reduced]
${}^{}$

\noindent
(a)
If the Dehn twist $D: \cal G \to \cal G$ is defined in the classical way,
then an element $w \in \Pi(\cal{G})$ is $D$-reduced if and only if it's cyclically reduced. 
In particular, if for some vertex $v_0 \in V(\cal G)$ an element $w \in \pi_1(\cal G, v_0) \subset \Pi(\cal G)$ is $D$-reduced, then $w$ is $D$-zero if and only if it satisfies $\|w\|_{\cal G} = 0$.

\smallskip
\noindent
(b)
However, for $w \in \Pi(\cal{G})$ which is not $D$-reduced (or equivalently, not cyclically reduced), its usual conjugates which are cyclically reduced and its $D$-conjugates which are $D$-reduced can be very different, in fact they may not even have the same path type. 

This shows that even in case of an efficient Dehn twist $D$ for 
non-D-reduced 
word in $\Pi(\cal G)$ being $D$-zero and having its cyclically reduced path length equals to zero are not equivalent.
\end{rem}

In view of the existence result for efficient Dehn twist representatives from Theorem \ref{Efficient} (1) we will always, when an outer Dehn twist automorphism $\hat \phi \in \Out(\FN)$ is given without specification of a Dehn twist representative, assume that it is represented by an efficient Dehn twist $D: \cal G \to \cal G$. Similarly, 
a Dehn twist automorphism $\phi \in \Out(\FN)$ without specification of a Dehn twist representative is always assumed to be represented by an efficient Dehn twist $D: \cal G \to \cal G$. 

Recall from Remark \ref{some-D-lifts} that the last convention may appear slightly restrictive, in particular when it comes to a partial Dehn twist $D : \cal G \to \cal G$ relative to a subset $\cal V$ of vertices $v_i$ of $\cal G$ for which the induced vertex group automorphism $D_{v_i}$ is known to induce an outer Dehn twist automorphism $\hat D_{v_i}$. However, if follows from
Remark \ref{partial-D-rel-D} that this restriction is immaterial; this class of automorphisms is precisely the same as the one given in Definition \ref{partial-Dehn-twists} (b), i.e. the class of partial Dehn twists relative to a family of local Dehn twists.

In view of the uniqueness of $D$ affirmed by part (2) of Theorem \ref{Efficient}, 
the following notion is well defined:

\begin{defn}
\label{phi-zero}
Let $\phi \in \Aut(\FN)$ be a Dehn twist automorphism. Then any element $w \in \FN$ is called $\phi$-reduced (or $\phi$-zero) if it is $D$-reduced (or $D$-zero) with respect to some efficient Dehn twist representative of $\phi$.
\end{defn}

\section{Cancellation Results}

\subsection{Growth type}

${}^{}$

We first recall some standard notation and well know elementary facts:

\begin{defn}
Let $G$ be a finitely generated group, and let $X=\{x_{1}, x_{2},...,x_{n}\}$ denote its generating set. The {\it length function} with respect to the generating set $X$ is defined by setting for any $g \in G$:
$$|g|_{X} := \min \{k \geq 0 \mid g=x_{i_{1}}^{\epsilon_{1}}x_{i_{2}}^{\epsilon_{2}}...x_{i_{k}}^{\epsilon_{k}}, i_{j} \in \{1,...,n\}, \epsilon_{j} \in \{\pm 1\}\}.$$

The {\it cyclic length} of $g \in G$ is defined by
$$\|g\|_{X}:= \min \{|hgh^{-1}|_{X} \mid h\in G\}.$$
\end{defn}

\begin{rem}
\
\begin{enumerate}
\item 
For any $g \in G$ we have $|g|_{X} \geq 0$, and $|g|_{X}=0$ holds if and only of $g=1$.

\item For any $g \in G$
the cyclic length $\|g\|_{X}$ is the minimum of all lengths of elements in the conjugacy class $[g]$. 
The elements $h \in G$ and $hgh^{-1}$ such that $|g|_{X}=|hgh^{-1}|_{X}$ may not be unique.

\item 
If $G = \FN$ and $X$ is a basis, we also have
$$\|g\|_{X}=|gg|_{X}-|g|_{X}.$$
Furthermore
$\|g\|_{X}=|g|_{X}$ if and only if $g \in G$ is cyclically reduced.

\item For any words $g, h \in F_n$ we always have $|gh| \leq |g|+|h|$.
\end{enumerate}
\end{rem}

\begin{rem}
For two sets of generators $X=\{x_{1}, x_{2},...,x_{n}\}$ and $X^{\prime}=\{x^{\prime}_{1}, x^{\prime}_{2},...,x^{\prime}_{n}\}$ of $G$, the length fonctions $| \cdot |_{X}$ and $|\cdot |_{X^{\prime}}$ are equivalent up to a constant. To be more precise, there exists a constant $C>0$ such that for all $g \in G$:
$$\frac{1}{C}|g|_{X} \leq |g|_{X^{\prime}} \leq C|g|_{X}.$$
\end{rem}

\begin{defn}
Let $\phi \in Aut(G)$ be an automorphism and $X$ be any generating system of $G$. For any element $g \in G$ we introduce a function $Gr(\phi,g)$ 
to trace the length of $g$ under the iteration of $\phi$: 
\begin{align*}
Gr(\phi,g)(n): & \,  \Bbb{N} \rightarrow \Bbb{N}\\
& n  \mapsto  |\phi^{n}(g)|_{X}
\end{align*}
Similarly, for the cyclic length we have:
\begin{align*}
Gr_{c}(\phi,g)(n): & \,\Bbb{N} \rightarrow \Bbb{N}\\
& n  \mapsto  \| \phi^{n}(g) \|_{X}
\end{align*}
\end{defn}

Notice that for $\phi\in Aut(G)$, $g_{1}, g_{2} \in [g]$, and $n \in \Bbb{N}$ one has:
$$Gr_{c}(\phi,g_{1})(n)=Gr_{c}(\phi,g_{2})(n)$$

Also, for $\hat \phi \in Out(G)$ and $\phi_{1}, \phi_{2} \in \hat{\phi}$ we obtain
$$Gr_{c}(\phi_{1},g)(n)=Gr_{c}(\phi_{2},g)(n)$$ 
for all $g \in G$, $n \in \Bbb{N}$. 
Thus it makes sense to consider the cyclic length of the conjugacy class $[g]$ (or equivalently $g \in [g]$) under the iteration of the outer automorphism $\hat{\phi} \in Out(G)$.

\begin{defn}[growth type]
\label{growth-type-defn}
(a)
We say that $g\in G$ {\it grows at most polynomially of degree $d$} under iteration of $\phi \in Aut(G)$ if $Gr(\phi,g)$ is bounded above by a polynomial of degree $d$. 
The conjugacy class $[g]$ {\it grows at most polynomially of degree $d$} under iteration of $\hat{\phi} \in Out(G)$ (or equivalently, of $\phi \in \hat{\phi}$) if $Gr_{c}(\hat{\phi},[g])$ (or $Gr_{c}(\phi,[g])$) is bounded above by a polynomial of degree $d$.

The automorphism $\phi \in Aut(G)$ {\em has at most polynomial growth of degree $d$} if any $g \in G$ grows at most polynomially of degree $d$. Similarly, the outer automorphism $\hat \phi \in Aut(G)$ {\em has at most polynomial growth of degree $d$} if any $[g] \subset G$ grows at most polynomially of degree $d$.

\smallskip
\noindent
(b)
Similarly we say that $g\in G$ (or [g]) {\it grows at least polynomially of degree d} under iteration of $\phi \in Aut(G)$ (or of $\hat{\phi} \in Out(G)$ respectively) if $Gr(\phi,g)$ (or $Gr_{c}(\hat{\phi},[g])$) is bounded below by a polynomial of degree d with positive leading coefficient.

The automorphism $\phi \in Aut(G)$ {\em has at least polynomial growth of degree $d$} if some $g \in G$ grows at least polynomially of degree $d$. Similarly, the outer automorphism $\hat \phi \in Aut(G)$ {\em has at least polynomial growth of degree $d$} if some $[g] \subset G$ grows at least polynomially of degree $d$.

\smallskip
\noindent
(c)
If $g$ (or $[g]$) grows both at most and at least polynomially of degree $d$, then we say that it grows polynomially of degree $d$.

\smallskip
\noindent
(d)
An automorphism $\phi \in Aut(G)$ (or an outer automorphism $\hat{\phi} \in Out(G)$) {\it grows polynomially of degree d} if every $g \in G$ 
(or every $[g] \subset G$)
grows at most polynomially of degree $d$ and in particular there exists an element $g_{0} \in G$ 
(or a conjugacy class $[g] \subset G$)
that grows polynomially of degree $d$.
\end{defn}

\begin{rem}
Because the length functions with respect to different generating systems are equivalent up to a constant, the definitions about growth types given in Definition \ref{growth-type-defn} are independent of the chosen generating system of $G$.
\end{rem}

\begin{defn}
Let $(w_k)_{k=1}^{+\infty}$ a family of elements in $G$, we sometimes say that the sequence $w_k$ grows {\it at least polynomially of degree d} if there exists constant $C_1>0$ such that $C_1 k^d \leq |w_k|$. Similarly, we say that $w_k$ grows {\it at most polynomially of degree d} if there exists $C_2 >0$ such that $|w_k| \leq C_2 k^d$ and that $w_k$ grows {\it polynomially of degree d} if one can find $C_1, C_2>0$ such that $C_1 k^d \leq |w_k| \leq C_2 k^d$.
\end{defn}

\subsection{Cancellation and iterated products}
 
${}^{}$

Let $F_n$ be a free group and denote by $\cal{A}$ a fixed basis of $F_n$.
As before, we denote the combinatorial length (with respect to $\cal{A}$) of an element $W$ 
by $|W|=|W|_{\cal{A}}$, 
and the cyclically reduced length of $W$ 
by $\| W \| = \| W \|_{\cal A}$.

\begin{lem}
\label{cancellation-blocker}
Let $F_n$ be a free group, let $V \subset \FN$ be a subgroup of rank $n \geq 2$, and let $(w_i)_{i \in \N}$ be any infinite family of elements $w_i \in F_n$. Then for any basis $\cal A$ of $F_n$ there exists an element $v \in V$ and a constant $C \geq 0$ such that for infinitely many indices $i \in \N$ the cancellation in the product $|w_i v w_i^{-1} |_{\cal A}$ is bounded, i.e. one has:
$$|w_i v w_i^{-1} |_{\cal A} \geq |w_i|_{\cal A} + |v|_{\cal A} + |w_i^{-1}|_{\cal A} - C = 2|w_i|_{\cal A} + |v|_{\cal A} - C$$
\end{lem}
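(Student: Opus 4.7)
My plan is to translate the claimed inequality into a statement about projections onto axes in the Cayley tree $T$ of $\FN$ (with respect to the basis $\cal A$), on which $\FN$ acts by left multiplication. Recall that every nontrivial $g \in \FN$ acts as a hyperbolic isometry with translation length $\|g\|_{\cal A}$ and bi-infinite axis $\alpha_g \subset T$, and that the base-vertex displacement satisfies $|g|_{\cal A} = d(1, g \cdot 1) = 2\, d(1, \alpha_g) + \|g\|_{\cal A}$. Applying this to $w_i v w_i^{-1}$, whose axis is $w_i \cdot \alpha_v$ and whose cyclic length equals $\|v\|_{\cal A}$, and using the standard tree identity $d(1, w_i^{-1}) = d(1, q_v) + d(q_v, p_i) + d(p_i, w_i^{-1})$ (valid when the geodesic from $1$ to $w_i^{-1}$ meets $\alpha_v$, which is the worst case) with $q_v := \pi_{\alpha_v}(1)$ and $p_i := \pi_{\alpha_v}(w_i^{-1})$, a short computation produces
\begin{equation*}
2|w_i|_{\cal A} + |v|_{\cal A} - |w_i v w_i^{-1}|_{\cal A} \;=\; 4\, d(1,\alpha_v) + 2\, d(q_v, p_i).
\end{equation*}
Hence the lemma reduces to finding a single $v \in V$ for which $d(q_v, p_i)$ stays bounded for infinitely many $i$.

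Next, I invoke the hypothesis $\rank V = n \geq 2$: by Nielsen--Schreier, $V$ is free of rank at least two, so it contains two non-commuting elements $v_1, v_2$. Since in $\FN$ the stabiliser of a point of $\partial \FN$ is cyclic, non-commuting hyperbolic isometries can share no endpoints of their axes; consequently the axes $\alpha_1 := \alpha_{v_1}$ and $\alpha_2 := \alpha_{v_2}$ have four pairwise distinct endpoints in $\partial T$, and therefore $\alpha_1 \cap \alpha_2$ is a compact (possibly empty or reduced to a single point) sub-segment $\beta \subset T$.

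The geometric heart of the argument is then to observe that for every $x \in T$ at least one of the projections $\pi_{\alpha_j}(x)$ lies within a uniformly bounded neighbourhood of $\beta$. Indeed, if $\pi_{\alpha_1}(x)$ lies strictly past an endpoint $b$ of $\beta$, then $x$ is contained in the subtree of $T$ attached to $\alpha_1$ beyond $b$; since $\alpha_2$ branches away from $\alpha_1$ at $b$, no point of $\alpha_2$ past $b$ is closer to $x$ than $b$ itself, forcing $\pi_{\alpha_2}(x) = b$. The case $\beta = \emptyset$ is handled in the same way using the bridge (the shortest geodesic segment joining $\alpha_1$ to $\alpha_2$) in place of $\beta$. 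Applying this with $x = w_i^{-1}$ yields a constant $K$, depending only on $v_1, v_2$, such that for every $i$ some index $j(i) \in \{1,2\}$ satisfies $d(q_{v_{j(i)}}, p_i^{(j(i))}) \leq K$; by the pigeonhole principle a single $j \in \{1,2\}$ works for infinitely many $i$. Setting $v := v_j$ and $C := 4\, d(1,\alpha_j) + 2K$, the identity of the first paragraph then delivers the required inequality on this infinite subsequence.

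The main obstacle is the tree-geometric observation in the third paragraph: one must carefully handle all relative positions of $x$, $\alpha_1$, $\alpha_2$, and $\beta$, in particular the cases where the axes are disjoint and joined by a non-trivial bridge, and where $x$ happens itself to lie on one of the axes or on $\beta$. Once this case analysis is in place, the displacement formula and the pigeonhole step combine to give the desired bound routinely.
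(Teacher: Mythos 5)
Your proof is correct, and it takes a genuinely different route from the paper's. The paper argues combinatorially with words over the basis $\cal A$: it examines the products $w_i v_1^m w_i^{-1}$ for growing exponents $m$ and observes that if cancellation is not uniformly bounded for any fixed $m$, then for every $m$ some $w_{j(m)}$ must carry $v_1^{-m}$ or $v_1^{m}$ as a suffix; it then passes to that infinite subfamily and repeats the trichotomy with $v_2$, where the two ``bad'' alternatives are ruled out because a single word cannot simultaneously end in arbitrarily long powers of $v_1$ and of $v_2$ when $\langle v_1,v_2\rangle$ has rank $2$. The element $v$ the paper produces is therefore a power $v_1^m$ or $v_2^m$. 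Your argument works instead in the Cayley tree: you translate the cancellation defect $2|w_i| + |v| - |w_i v w_i^{-1}|$ into the tree-geometric upper bound $4\,d(1,\alpha_v) + 2\,d\bigl(\pi_{\alpha_v}(1), \pi_{\alpha_v}(w_i^{-1})\bigr)$ via the displacement formula $|g| = 2\,d(1,\alpha_g) + \|g\|$, and then use the fact that for non-commuting $v_1, v_2$ the axes overlap in at most a compact segment $\beta$ (or are joined by a bridge), so that for any point of the tree at least one of the two projections lands in $\beta$; pigeonhole over $j\in\{1,2\}$ then gives $v = v_j$ directly, with no power needed. Both arguments hinge on rank $\geq 2$; yours is conceptually cleaner, avoids the iteration over exponents, and gives a slightly sharper choice of $v$, while the paper's stays entirely within elementary combinatorics of reduced words and needs no background on trees, axes, or boundary stabilisers.
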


\begin{proof}
Pick elements $v_1$ and $v_2$ in $V$ which generate a subgroup of rank 2. Consider the products $w_i v_1^m w_i^{-1}$, 
for increasing integers $m$. We observe that one of the following must hold:
\begin{enumerate}
\item
For some $m \in \N$ the cancellation in $w_i v_1^m w_i^{-1}$ is bounded uniformly with respect to all $i \in \N$.
\item
For any $m \in \N$ there is an index $j(m) \in \N$ such that $w_{j(m)}$ has the suffix $v_1^{-m}$.
\item
For any $m \in \N$ there is an index $j'(m) \in \N$ such that $w_{j'(m)}^{-1}$ has the prefix $v_1^{-m}$, or equivalently, $w_{j'(m)}$ has the suffix $v_1^{m}$.
\end{enumerate}
In the case of alternative (1), the proof is completed. In case of (2), we replace the family of all $w_i$ by the subfamily of all $w_j$ with $j = j(m)$ for any $m \in \N$. It follows from the statement (2) that this subfamily is infinite. In case (3) we do the same, but with $j = j'(m)$.

We now repeat the above trichotomy, with $w_i$ replaced by $w_j$, and with $v_1$ replaced by $v_2$. We observe that in this second trichotomy the alternatives (2) and (3) lead to elements $w_j$ with suffix that is simultaneously an arbitrary large positive or negative power of $v_1$ and of $v_2$. But this is impossible, by our assumption that $v_1$ and $v_2$ generate a subgroup of rank 2. Thus alternative (1) must hold for the second trichotomy, which proves our claim.
\end{proof}

\subsection{Cancellation in long products}
${}^{}$

\begin{defn} 
We say $U$ and $V$ admit a common root if there exist an element $R \in F_n$, $R \neq 1$, such that $U=R^{m_1}$, $V=R^{m_2}$ for some suitable $m_1, m_2 \in \Bbb{N}$, 
$R$ is called a {\it common root} of $U$ and $V$.
\end{defn}

We recall the following well known fact:

\begin{prop}[\cite{Lyndon}]

For any elements $U, V \in F_n$, 
there is an algorithm which decides whether they admit a common root.
\end{prop}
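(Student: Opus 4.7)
The plan is to reduce the common-root question to a commutativity check, which is decidable via the solvability of the word problem in $F_n$.

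First, observe that if $U = R^{m_1}$ and $V = R^{m_2}$ for some common root $R$, then $U$ and $V$ both lie in the cyclic subgroup $\langle R \rangle$ and in particular commute. The converse relies on the classical fact that in $F_n$, the centralizer of any non-trivial element is infinite cyclic. This can be derived from the Nielsen--Schreier theorem (every subgroup of a free group is free) together with the observation that abelian subgroups of $F_n$ must be cyclic, since any non-cyclic free group contains a free subgroup of rank $\geq 2$ and is thus non-abelian. Consequently, if $U, V \neq 1$ and $UV = VU$, then $V$ lies in the cyclic centralizer $C(U) = \langle R \rangle$ of $U$, and one obtains integers $m_1, m_2$ with $U = R^{m_1}$ and $V = R^{m_2}$.

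Given this equivalence, the algorithm is immediate. The degenerate cases $U = 1$ or $V = 1$ are handled separately (they trivially admit a common root in the natural interpretation of the definition). Otherwise, form the word $W = U V U^{-1} V^{-1}$ in the free group alphabet and reduce it freely by iteratively cancelling adjacent pairs of mutually inverse letters; the reduction terminates in finitely many steps and produces the unique reduced representative of $[U,V]$. Then $U$ and $V$ admit a common root if and only if this reduced word is empty.

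There is no serious obstacle. The only nontrivial ingredient is the centralizer lemma, which is standard; an alternative, more combinatorial route avoids it entirely by cyclically reducing $U$ and $V$, determining each of their primitive cyclic roots through checking the divisors of their cyclic lengths, and comparing these primitive roots up to cyclic conjugation. Either route yields the required algorithm.
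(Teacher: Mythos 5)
There is a genuine gap: the reduction to commutativity does not match the paper's definition of a common root. The definition preceding Lemma~\ref{lemma1} requires $U = R^{m_1}$, $V = R^{m_2}$ with $m_1, m_2 \in \mathbb{N}$, i.e.\ \emph{strictly positive} exponents, and the proof of Lemma~\ref{lemma1} confirms this by taking $m_1, m_2 \geq 1$. Commutativity, by contrast, only detects whether $U$ and $V$ lie in a common maximal cyclic subgroup $\langle R \rangle$, which allows exponents of either sign. Take $U = a$ and $V = a^{-1}$: they commute, but they admit no common root in the paper's sense, since $a = R^{m_1}$ and $a^{-1} = R^{m_2}$ with $m_1, m_2 \geq 1$ would force $R^{m_1 + m_2} = 1$ with $R \neq 1$, impossible. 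Your algorithm therefore accepts pairs that the proposition must reject. This sign-sensitivity is not a cosmetic quirk of the definition: it is exactly the distinction between positive and negative bonding in Definition~\ref{efficient-D-twists-defn}(5), and it is what makes the cancellation estimates of Lemmas~\ref{lemma2} and~\ref{lemma3} true (applied with $X = U^{-1}$, $Y = V$, the bounded-cancellation conclusion holds even when $U = V$, precisely because $U^{-1}$ and $U$ have no common root with positive exponents).

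Your alternative sketch is the right idea but needs a correction: comparing primitive roots ``up to cyclic conjugation'' is too coarse. For instance $U = ab$ and $V = ba$ are both primitive and are cyclic conjugates of each other, yet they admit no common root. The correct algorithm is to compute the \emph{actual} primitive root of each element — cyclically reduce $U = w^{-1} U_0 w$, test each proper divisor $d$ of $|U_0|$ to find the shortest $S_0$ with $U_0 = S_0^{|U_0|/d}$, and set $R_U = w^{-1} S_0 w$; likewise $R_V$ — and then test for \emph{exact} equality $R_U = R_V$ as elements of $F_n$. Equivalently, one may first run your commutativity test (which decides membership in a common cyclic subgroup), and in the affirmative case additionally determine the exponents $a, b$ with $U = R_0^a$, $V = R_0^b$ and check that $a$ and $b$ have the same sign. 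Either of these repaired versions yields a correct decision procedure; as it stands, the proposal does not.
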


The following is well-known too: 

\begin{lem}\label{lemma1}
For two elements $U, V \in F_n$ 
we have $U^{n_1} \neq V^{n_2}$ for any $n_1, n_2 \geq 1$ if and only if $U,V$ don't admit a common root.
\end{lem}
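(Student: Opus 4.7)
The plan is to prove the lemma as a direct consequence of the classical fact that centralizers of nontrivial elements in a free group are infinite cyclic. One direction of the equivalence is completely routine, and essentially all the content lies in the other direction.

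First I would dispatch the easy implication: if $U$ and $V$ admit a common root $R\neq 1$, so that $U=R^{m_1}$ and $V=R^{m_2}$ with $m_1,m_2\geq 1$, then setting $n_1=m_2$ and $n_2=m_1$ immediately yields $U^{n_1}=R^{m_1 m_2}=V^{n_2}$. This shows that if a common root exists, then the equality $U^{n_1}=V^{n_2}$ can be realized for some positive exponents, which is the contrapositive of one direction.

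For the main direction, assume that $U^{n_1}=V^{n_2}$ for some $n_1,n_2\geq 1$. I would first dispose of the trivial cases ($U=1$ forces $V^{n_2}=1$ hence $V=1$, and vice versa, so up to a harmless convention we may assume $U,V\neq 1$). Setting $W=U^{n_1}=V^{n_2}\neq 1$, both $U$ and $V$ commute with $W$. By the classical result that $C_{F_n}(W)$ is infinite cyclic, there exists $R\in F_n$ with $\langle R\rangle=C_{F_n}(W)$, and hence $U=R^{a}$, $V=R^{b}$ for some integers $a,b$. Since $U,V\neq 1$ we have $a,b\neq 0$, and the relation $R^{a n_1}=R^{b n_2}$ forces $a n_1=b n_2$, so $a$ and $b$ share the same sign. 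Replacing $R$ by $R^{-1}$ if necessary, I may assume $a,b\geq 1$, so that $R$ is a common root of $U$ and $V$ in the sense of the definition.

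There is essentially no obstacle here beyond invoking the standard centralizer lemma for free groups; the only care required is the sign adjustment at the end to match the definition, which asks for nonnegative exponents $m_1,m_2$ of the common root. Since the lemma is invoked in the excerpt as a classical result (cf.\ the preceding reference to \cite{Lyndon}), I would keep the proof short and simply cite the cyclic-centralizer fact rather than re-proving it.
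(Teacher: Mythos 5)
Your proof is correct but takes a genuinely different route from the paper. The paper's argument for the nontrivial direction is a one-line appeal to a combinatorial comparison of prefixes and suffixes of $U$ and $V$: from $U^{n_1}=V^{n_2}$ one aligns the two reduced words and peels off a common period, which is then a common root. You instead observe that $W=U^{n_1}=V^{n_2}$ is centralized by both $U$ and $V$, and invoke the classical fact that centralizers of nontrivial elements of a free group are infinite cyclic, so $U$ and $V$ are both powers of a single generator $R$ of $C_{F_n}(W)$; the sign normalization at the end is needed and you handle it correctly. The two approaches buy slightly different things: the paper's prefix/suffix comparison is self-contained and elementary, in the spirit of the ``combinatorics of words'' flavor of the surrounding section, whereas your centralizer argument is shorter and cleaner but outsources the real content to a standard structural fact about free groups (which is, in fact, of comparable depth to the lemma itself and is usually proved by exactly such a word-combinatorial or Nielsen argument). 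One small remark: both arguments implicitly assume $\N$ denotes the positive integers in the definition of a common root; if $0$ were allowed as an exponent, the case $U=1$, $V\neq 1$ would be a counterexample to the stated equivalence, so the convention matters, and your explicit handling of the degenerate case $U=1$ or $V=1$ is a sensible addition that the paper omits.
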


\begin{proof}
On one hand, if $U,V$ admit a common root $R$ such that $U=R^{m_1}$, $V=R^{m_2}$ for some $m_1, m_2 \geq 1$, 
then we have $U^{m_2}=V^{m_1}$.

On the other hand, 
if there exist $n_1, n_2 \geq 1$ such that $U^{n_1}=V^{n_2}$,
by comparing the subfixes and prefixes of $U$ and $V$, we can find a common root $R \in F_n$.
\end{proof}

\begin{lem}\label{lemma2}
If $U^{-1}, V \in F_n$ don't admit a common root, 
then the cancellation of the products $U^{n_1} V^{n_2}$, 
for any $n_1, n_2 \in \Bbb{N}$, 
is uniformly bounded. 
As consequence, there exists a constant and $K_0=K(U^{-1},V)$ such that for any $n_1, n_2 \in \Bbb{N}$, 
we have:
$$|U^{n_1}V^{n_2}| \geq n_1 \| U \| + n_2 \|V \| + K_0 $$ 
\end{lem}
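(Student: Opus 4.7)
The plan is to cyclically reduce both sides: write $U = P R_U P^{-1}$ and $V = Q R_V Q^{-1}$ with $R_U, R_V$ cyclically reduced and the decompositions freely reduced as written (so $\|U\| = |R_U|$, $\|V\| = |R_V|$, and one has $U^{n_1} = P R_U^{n_1} P^{-1}$, $V^{n_2} = Q R_V^{n_2} Q^{-1}$ without further cancellation). Reducing $P^{-1}Q$ to a word $M \in F_n$ then yields
$$U^{n_1} V^{n_2} = P \cdot R_U^{n_1} M R_V^{n_2} \cdot Q^{-1},$$
and by reducedness of the original decompositions, no cancellation can occur at the outer boundaries (between $P$ and the reduced middle, or between the reduced middle and $Q^{-1}$). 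Hence
$$|U^{n_1} V^{n_2}| = n_1 \|U\| + n_2 \|V\| + 2(|P| + |Q|) - 2 k(n_1, n_2),$$
where $k(n_1, n_2)$ counts cancellation pairs inside the middle block $R_U^{n_1} M R_V^{n_2}$. The lemma thereby reduces to showing that $k(n_1, n_2)$ is uniformly bounded by some constant $K'$ depending only on $U, V$.

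I would prove this uniform bound by contradiction, assuming $k(n_1, n_2)$ grows without bound. For sufficiently large $k(n_1, n_2)$ the factor $M$ is entirely consumed during free reduction, so an arbitrarily long suffix of $R_U^{n_1}$ matches (via the vanished $M$) an arbitrarily long prefix of $R_V^{n_2}$. Since $R_U^{n_1}$ is periodic with period $|R_U|$ and $R_V^{n_2}$ with period $|R_V|$, the Fine--Wilf periodicity theorem applied to the cancelled segment forces $R_U^{-1}$ and $R_V$ to both be powers of a common primitive word $T \in F_n$, say $R_U^{-1} = T^a$ and $R_V = T^b$ for integers $a, b \geq 1$; moreover, the intermediate $M = P^{-1}Q$ must align with this period, forcing $P^{-1}Q \in \langle T_0 \rangle$, where $T_0$ is a primitive root of $T$. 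Substituting back, both $U^{-1} = P T^a P^{-1}$ and $V = Q T^b Q^{-1} = P T^b P^{-1}$ lie in the cyclic subgroup $P \langle T_0 \rangle P^{-1}$, so they commute and share the common root $P T_0 P^{-1}$, contradicting the hypothesis that $U^{-1}$ and $V$ admit no common root.

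Given the uniform bound $k(n_1, n_2) \leq K'$, the desired inequality follows with $K_0 := 2(|P|+|Q|) - 2K'$. The hard part will be the contradiction step: one must propagate the periodicity of the cancelled segment across the intermediate word $M$, and then lift the common-root conclusion for the cyclic cores $R_U^{-1}$ and $R_V$ back to a common-root conclusion for the original elements $U^{-1}$ and $V$. In particular, the Fine--Wilf argument must not only produce a common period $T$ for $R_U^{-1}$ and $R_V$, but also yield the alignment condition $P^{-1}Q \in \langle T_0 \rangle$, which is precisely the additional input needed to ensure that $U^{-1}$ and $V$ commute.
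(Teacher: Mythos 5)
Your high-level strategy---cyclically reduce, reduce the problem to bounding the inner cancellation, and establish the bound by contradiction via periodicity---is essentially the one the paper uses (the paper merely says ``by comparing the suffixes and prefixes of $U$ and $V$ we can find a common root''), and you supply considerably more detail via Fine--Wilf, while honestly flagging that the alignment step across $M$ still needs to be carried out. However, the assertion that ``by reducedness of the original decompositions, no cancellation can occur at the outer boundaries'' is simply false, and with it the claimed equality
$$|U^{n_1} V^{n_2}| = n_1 \|U\| + n_2 \|V\| + 2(|P| + |Q|) - 2 k(n_1, n_2).$$
If $R_U^{n_1}$ (or $R_V^{n_2}$) is entirely consumed while the middle block reduces, the surviving first (or last) letter of the reduced middle can cancel into $P$ (or $Q^{-1}$). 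An explicit counterexample: take $P=ab$, $R_U=c$, so $U=abcb^{-1}a^{-1}$, and $Q=abc^{-1}b^{-1}d$, $R_V=e$, so $V=abc^{-1}b^{-1}ded^{-1}bcb^{-1}a^{-1}$; then $U^{-1}$ and $V$ are conjugates of the distinct primitive letters $c^{-1}$ and $e$ and have no common root, yet $M=P^{-1}Q=c^{-1}b^{-1}d$, and for $n_1=1$ the middle $c\cdot c^{-1}b^{-1}d\cdot e^{n_2}$ reduces to $b^{-1}de^{n_2}$, whose leading $b^{-1}$ cancels the last letter of $P$. One computes $|UV^{n_2}|=n_2+7$, whereas your formula with $k=1$ predicts $n_2+13$ (part of the discrepancy also comes from not charging $k$ for the cancellation producing $M$ from $P^{-1}Q$).

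The gap is repairable: once $k$ is uniformly bounded, the outer cancellation is itself bounded by roughly $|P|+|Q|$ and can simply be absorbed into $K_0$, together with the constant slack $|P|+|Q|-|M|$. But because the lemma asserts a \emph{lower} bound on $|U^{n_1}V^{n_2}|$, any cancellation you fail to account for works against you; it must be bounded explicitly, not excluded by fiat. So the correct logical order is: first bound $k$ by your Fine--Wilf contradiction; then observe that with $k$ bounded the powers $R_U^{n_1},R_V^{n_2}$ survive for all large $n_1,n_2$ (so the outer boundaries are clean in that range) and handle the finitely constrained small-exponent cases by a crude estimate; then assemble $K_0$.
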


\begin{proof}
If no constant $K_0$ as postulated exists,  then by comparing the subfixes and prefixes of $U$ and $V$ we can find a common root $R \in F_n$ for $U^{-1}$ and $V$, which contradicts our hypothesis.

Therefore by definition we can find $B_1 \geq 0$ such that 
$$|U^{n_1}|+|V^{n_2}|-|U^{n_1}V^{n_2}| \leq B_1.$$
Hence, by taking $K_0 = -B_0$ we have
$$|U^{n_1}V^{n_2}| \geq |U^{n_1}| + |V^{n_2}| +K_0 \geq n_1 \| U \| + n_2 \|V\| + K_0 $$
for any $n_1, n_2 \geq 0$.

\end{proof}

\begin{rem}
Furthermore,
for $U^{-1}, V \in F_n$ which don't admit a common root, 
we have at the same time that the cancellation of the products $V^{n_2}U^{n_1}$, 
for any $n_1, n_2 \in \Bbb{N}$, 
is uniformly bounded by some constant $B_2 \geq 0$.
Taking $K=-B_1 - B_2$, we have immediately 
$$\|U^{n_1}V^{n_2}\| \geq n_1 \| U \| + n_2 \|V\| + K .$$ 
\end{rem}

\begin{lem}\label{lemma3}
Let $X, b, Y \in F_n$ be elements such that 
$X^{-m_1} \neq b Y^{m_2}b^{-1}$, for any $m_1, m_2 \geq 1$.
Then there exists a constant $K=K(X,b,Y)$ 
such that for any $n_1, n_2 \geq 0$ we have:
$$| X^{n_1}b Y^{n_2} | \geq n_{1} \| X \| + n_2 \| Y \| + K.$$

\end{lem}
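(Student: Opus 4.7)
The plan is to reduce this statement to Lemma \ref{lemma2} by applying it to the pair $U = X$ and $V = bYb^{-1}$. The key observation is that $U^{n_1}V^{n_2} = X^{n_1}(bYb^{-1})^{n_2} = X^{n_1} b Y^{n_2} b^{-1}$, which differs from the word $X^{n_1} b Y^{n_2}$ we are interested in only by the bounded suffix $b^{-1}$. Thus any lower bound on the length of the former immediately yields a lower bound (at worst shifted by $|b|$) on the length of the latter.

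First I would verify that the hypothesis of Lemma \ref{lemma2} is met. Lemma \ref{lemma2} requires that $U^{-1} = X^{-1}$ and $V = bYb^{-1}$ admit no common root. By Lemma \ref{lemma1}, this is equivalent to saying that $X^{-n_1} \neq (bYb^{-1})^{n_2}$ for all $n_1, n_2 \geq 1$. Since $(bYb^{-1})^{n_2} = b Y^{n_2} b^{-1}$, this is exactly the assumption made in the statement of our lemma.

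Next I would apply Lemma \ref{lemma2} to obtain a constant $K_0 = K(X^{-1}, bYb^{-1})$ such that for every $n_1, n_2 \geq 0$,
$$|X^{n_1} (bYb^{-1})^{n_2}| \; \geq \; n_1 \|X\| + n_2 \|bYb^{-1}\| + K_0.$$
The cyclic length is invariant under conjugation, so $\|bYb^{-1}\| = \|Y\|$. Combining this with the triangle inequality
$$|X^{n_1} b Y^{n_2} b^{-1}| \; \leq \; |X^{n_1} b Y^{n_2}| + |b^{-1}| \; = \; |X^{n_1} b Y^{n_2}| + |b|,$$
we get
$$|X^{n_1} b Y^{n_2}| \; \geq \; n_1 \|X\| + n_2 \|Y\| + K_0 - |b|,$$
and setting $K := K_0 - |b|$ yields the desired inequality.

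There is no real obstacle here; the entire content of the lemma is the translation of the hypothesis into the language of Lemma \ref{lemma2}, together with the observation that conjugating $Y$ by $b$ and then absorbing the outer $b^{-1}$ loses only a bounded amount. Essentially all of the cancellation analysis is hidden inside the already established Lemmas \ref{lemma1} and \ref{lemma2}.
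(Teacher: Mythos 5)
Your proof is correct and follows essentially the same approach as the paper: rewrite $X^{n_1} b Y^{n_2}$ as $X^{n_1}(bYb^{-1})^{n_2} b$, apply Lemma~\ref{lemma1} to check the no-common-root hypothesis for $X^{-1}$ and $bYb^{-1}$, invoke Lemma~\ref{lemma2}, and absorb the trailing $b$ into the constant. The only cosmetic difference is that you make the conjugation invariance $\|bYb^{-1}\| = \|Y\|$ explicit, which the paper uses silently.
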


\begin{proof} 
For any $n_1, n_2 \geq 0$, we may consider the word 
$$X^{n_1}b Y^{n_2}=X^{n_1} bY^{n_2}b^{-1} b = X^{n_1} (bYb^{-1})^{n_2} b.$$

Taking $U=X$, $V=b Y b^{-1}$, we know from Lemma~\ref{lemma1} that $U^{-1}, V$ don't admit a common root. Hence it follows from Lemma~\ref{lemma2} that there exists $K_0=K(U^{-1},V)$ such that 
$$|U^{n_1}V^{n_2}| \geq n_1 \| U \| + n_2 \|V\| + K_0 .$$ 

Let $K=K_0 - |b|$, we have immediately the inequality
\begin{align*} 
|X^{n_1}b Y^{n_2}| & =|U^{n_1}V^{n_2}b| \\
& \geq |U^{n_1}V^{n_2}|-|b| \geq n_1 \| U \| + n_2 \|V \| + K=n_1 \| X \| + n_2 \|Y\| + K.
\end{align*}

\end{proof}

\begin{rem}

Please notice that in 
the situation considered in Lemma \ref{lemma3} 
we may {\em not} have the following inequality:
$$\| X^{n_1}b Y^{n_2} \| \geq n_{1} \| X \| + n_2 \| Y \| + K$$

Counter-example: Consider $F_2=\langle a, b \rangle$ and let $X=a^{-1}$, $Y=a$.

\end{rem}

\begin{lem}\label{remark1}
Let $X, b, Y \in F_n$ be elements such that 
$X^{-m_1} \neq b Y^{m_2}b^{-1}$, for any $m_1, m_2 \geq 1$.
Then there exist cyclically reduced conjugates $\tilde{X}, \tilde{Y}$ of $X,Y$ and $n_0 \in \Bbb{N}$ 
such that for all $n_1, n_2 \geq n_0$,
in the reduced product of $X^{n_1}b Y^{n_2}$
neither $\tilde{X}$ nor $\tilde{Y}$ is completely cancelled.
\end{lem}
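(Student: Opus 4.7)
The plan is to use the length lower bound from Lemma \ref{lemma3} as the principal input, translated into a uniform upper bound on the total cancellation that can occur in forming the reduced product.

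First I would choose cyclically reduced conjugates by writing $X = u \tilde{X} u^{-1}$ and $Y = v \tilde{Y} v^{-1}$ in reduced form, where $\tilde{X}, \tilde{Y}$ are cyclically reduced. With this choice the words $u \tilde{X}^{n_1} u^{-1}$ and $v \tilde{Y}^{n_2} v^{-1}$ are themselves reduced for every $n_1, n_2 \geq 1$, and one has $\|X\| = |\tilde{X}|$, $\|Y\| = |\tilde{Y}|$. Lemma \ref{lemma3} then gives a constant $K = K(X,b,Y)$, independent of $n_1$ and $n_2$, such that $|X^{n_1} b Y^{n_2}| \geq n_1 |\tilde{X}| + n_2 |\tilde{Y}| + K$.

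Next I would consider the concatenation $W := u \tilde{X}^{n_1} u^{-1} \cdot b \cdot v \tilde{Y}^{n_2} v^{-1}$, whose raw length is $2|u| + 2|v| + |b| + n_1|\tilde{X}| + n_2|\tilde{Y}|$. The only junctions of $W$ at which cancellations can initiate are the two interfaces $u^{-1}\,|\,b$ and $b\,|\,v$, since the outer blocks $u \tilde{X}^{n_1} u^{-1}$ and $v \tilde{Y}^{n_2} v^{-1}$ are each already reduced. Comparing the raw length of $W$ with the lower bound above, the total number of cancelled letter pairs in the reduction $W \to X^{n_1} b Y^{n_2}$ is at most $c_0 := (2|u| + 2|v| + |b| - K)/2$, a constant independent of $n_1, n_2$. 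Since cancellation propagates only outward from the two interfaces, it can only eat letters from the right end of the subword $\tilde{X}^{n_1}$ and from the left end of $\tilde{Y}^{n_2}$; letting $r_X$ and $\ell_Y$ denote the respective numbers, one gets $r_X,\ell_Y \leq 2c_0$.

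I would then finish by choosing $n_0 := 1 + \lceil 2 c_0 / \min(|\tilde{X}|, |\tilde{Y}|) \rceil$. For $n_1, n_2 \geq n_0$, the surviving prefix of $\tilde{X}^{n_1}$ has length at least $|\tilde{X}|$, so its leftmost $|\tilde{X}|$ letters -- which coincide with the first full copy of $\tilde{X}$ -- appear intact inside the reduced form of $X^{n_1} b Y^{n_2}$. The symmetric argument on the right yields an uncancelled copy of $\tilde{Y}$.

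The main obstacle I expect is not conceptual but bookkeeping: one must verify rigorously that the cancellations starting at the two interfaces $u^{-1}\,|\,b$ and $b\,|\,v$ propagate strictly outward into $\tilde{X}^{n_1}$ and $\tilde{Y}^{n_2}$, and cannot (after meeting in $b$) interact in a way that would eat letters from outside these two regions. Once this is established, the conversion of the bound on the total cancellation into separate bounds on $r_X$ and $\ell_Y$, and hence the survival of the first copy of $\tilde{X}$ and the last copy of $\tilde{Y}$, is immediate.
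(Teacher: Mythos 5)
Your argument is correct and is essentially the same as the paper's: both express $X,Y$ in terms of their cyclically reduced conjugates, use Lemma~\ref{lemma3} together with the trivial raw-length upper bound $|X^{n_1}bY^{n_2}| \leq 2|w_1|+2|w_2|+|b|+n_1\|X\|+n_2\|Y\|$ to get a uniform bound $B$ on the total cancellation, and then pick $n_0$ so that $n_0\|X\|,n_0\|Y\|$ exceed that bound. The ``obstacle'' you flag is not really one: once you note that the blocks $u\tilde{X}^{n_1}u^{-1}$ and $v\tilde{Y}^{n_2}v^{-1}$ are internally reduced, the total length drop already caps how many letters can be removed from $\tilde{X}^{n_1}$ and from $\tilde{Y}^{n_2}$ regardless of how the cancellation at the two junctions interacts through $b$, so the outward-propagation bookkeeping is automatic. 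Your choice $n_0 = 1+\lceil 2c_0/\min(|\tilde{X}|,|\tilde{Y}|)\rceil$ is in fact a touch more careful than the paper's $n_0\|X\|\geq B$, which is off by one copy of $\tilde{X}$ (resp.\ $\tilde{Y}$) if one insists that a full copy survives rather than merely that the power is not entirely consumed; this is a cosmetic point and does not affect the substance.
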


\begin{proof}
Consider the cyclically reduced conjugates 
$\tilde{X}=w_1 X w_1^{-1}$, $\tilde{Y}= w_2 Y w_2^{-1}$ for $X$, $Y$, 
where $w_1, w_2 \in F_n$.
We may then write the word
$$X^{n_1}b Y^{n_2}= w_1^{-1}\tilde{X}^{n_1} w_1 b w_2^{-1} \tilde{Y}^{n_2} w_2^{-1}.$$

We derive from the uniformly bounded property that there exists $n_0 \in \Bbb{N}$ such that when $n_1, n_2 \geq n_0$,
in the reduced product of $X^{n_1}b Y^{n_2}= w_1^{-1}\tilde{X}^{n_1} w_1 b w_2^{-1} \tilde{Y}^{n_2} w_2^{-1}$, 
neither $\tilde{X}$ nor $\tilde{Y}$ is completely cancelled. 

More concretely, since we always have the inequality
$$|X^{n_1}b Y^{n_2}| \leq n_1 \|X\|+ n_2 \|Y\|+2 |w_1|+2|w_2|+|b|,$$
it follows from Lemma~\ref{lemma3} that the cancellation in the products $X^{n_1}bY^{n_2}$ is uniformly bounded by $B=2 |w_1|+2|w_2|+|b|-K$, where $K=K(X,b,Y)$ is the constant obtained in Lemma~\ref{lemma3}.
Then we may choose $n_0 \in \Bbb{N}$ that satisfies $\|X^{n_0}\|=n_0 \|X\|,\|Y^{n_0}\|=n_0 \|X\| \geq B$.

\end{proof}

\subsection{Main cancellation result}

${}^{}$

Let now $\cal{F}$ be a set which consists of finitely many triplets $(X_i, b_j, X_k)$, 
where $X_i, b_j, X_k \in F_n$ are elements which satisfy 

\begin{enumerate}
\item $\| X_i \| >0$ and $\| X_k \| >0$, and 
\item 
$X_{i}^{-m_1} \neq b_{j}X_{k}^{m_2}b_{j}^{-1}$, for any $m_1 \geq 1$, $m_2 \geq 1$.
\end{enumerate}
 
We consider below words 
$$w=w(n_1,n_2,...n_q)=c_0 y_{1}^{n_1}c_1 y_{2}^{n_2} c_2 \ldots c_{q-1} y_{q}^{n_q} c_q \in F_n$$ 
which have the property 
$(y_i, c_i, y_{i+1}) \in \cal{F}$, for $1 \leq i \leq q$.

\medskip

We then derive the following proposition:

\begin{prop}\label{prop1}
There exist constants $N_0 \geq 0$ and $K_0$ 
such that 
for any 
$w=w(n_1,n_2,...n_q) \in F_n$ as above
and any
$n_i \geq N_0$  (with $1 \leq i \leq q$),
we have
$$|w| \geq \sum\limits_{i=1}^{q} n_i \| y_i \| + (q-1) K_0.$$
\end{prop}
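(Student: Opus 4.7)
The plan is to reduce Proposition~\ref{prop1} to an iterated application of Lemma~\ref{lemma3}, exploiting the finiteness of $\mathcal{F}$ to extract uniform constants, and then to show that for sufficiently large exponents $n_i$ the cancellations at distinct interior junctions $c_i$ localize and do not interfere with one another.

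First, I set up the constants. Since $\mathcal{F}$ contains only finitely many triples, I apply Lemma~\ref{lemma3} to every triple $(X,b,Y) \in \mathcal{F}$ and take $K_0$ to be the minimum of the resulting $K(X,b,Y)$; I then apply Lemma~\ref{remark1} to each triple and let $N_0$ exceed the maximum of the corresponding thresholds. I enlarge $N_0$ further, if necessary, so that $N_0 \cdot \|y\| > -2 K_0$ for every element $y$ occurring in some triple of $\mathcal{F}$. Next, I pass to cyclically reduced conjugates: writing each $y_i = u_i^{-1} \tilde{y}_i u_i$ with $\tilde{y}_i$ cyclically reduced, and absorbing the $u_i$'s into modified interior connectors $\tilde{c}_i := u_i c_i u_{i+1}^{-1}$ for $1 \leq i \leq q-1$, I rewrite
\[
w \;=\; (c_0 u_1^{-1})\, \tilde{y}_1^{n_1}\, \tilde{c}_1\, \tilde{y}_2^{n_2}\, \tilde{c}_2 \,\cdots\, \tilde{c}_{q-1}\, \tilde{y}_q^{n_q}\, (u_q c_q).
\]
The no-common-root hypothesis on the triple $(y_i, c_i, y_{i+1}) \in \mathcal{F}$ is preserved for $(\tilde{y}_i, \tilde{c}_i, \tilde{y}_{i+1})$, since conjugation commutes with taking powers.

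The core of the argument is then the junction-wise lower bound: Lemma~\ref{lemma3} applied to each modified triple gives
\[
\bigl|\tilde{y}_i^{n_i}\, \tilde{c}_i\, \tilde{y}_{i+1}^{n_{i+1}}\bigr| \;\geq\; n_i \|y_i\| + n_{i+1} \|y_{i+1}\| + K_0,
\]
and Lemma~\ref{remark1} additionally guarantees that both $\tilde{y}_i$ and $\tilde{y}_{i+1}$ survive the reduction. The main obstacle — and the heart of the proof — is to upgrade these local three-term bounds to a global bound on $|w|$ without double-counting the central blocks $\tilde{y}_i^{n_i}$ that are shared between consecutive junctions. The key observation is that the cancellation at any single junction $\tilde{c}_i$ consumes at most $-K_0$ letters from the right end of $\tilde{y}_i^{n_i}$ and at most $-K_0$ letters from the left end of $\tilde{y}_{i+1}^{n_{i+1}}$; the condition $N_0 \|y_i\| > -2 K_0$ then ensures that the cancellations at $\tilde{c}_{i-1}$ and $\tilde{c}_i$ occupy disjoint portions of the central block $\tilde{y}_i^{n_i}$, so their effects simply add rather than compound. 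A straightforward induction on $q$ (or equivalently a direct summation over the $q-1$ interior junctions) then yields the claimed inequality. The edge contributions from the factors $c_0 u_1^{-1}$ and $u_q c_q$ amount only to a fixed additive constant independent of the $n_i$, which can be absorbed into $K_0$ by a final harmless decrease of this constant.
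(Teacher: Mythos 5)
Your proposal is correct and follows essentially the same route as the paper: extract uniform constants from the finite family $\cal{F}$ via Lemma~\ref{lemma3} and Lemma~\ref{remark1}, use the survival of the cyclically reduced conjugates $\tilde y_i$ to ensure cancellations at adjacent junctions occupy disjoint portions of each middle block, and then sum (or induct) over junctions. One small imprecision: the claim that the cancellation at a junction consumes ``at most $-K_0$ letters from each side'' does not literally follow from Lemma~\ref{lemma3}, which only controls the \emph{total} cancellation; the side-by-side control actually comes from the $n_0$ threshold in Lemma~\ref{remark1} (which is tied to the bound $B$ there, not to $K_0$), but since you do invoke Lemma~\ref{remark1} to choose $N_0$, the argument goes through as intended.
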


\begin{proof}
It follows directly from Lemma~\ref{lemma3} and Remark~\ref{remark1} that for each triplet $(y_i, c_i, y_{i+1})$, 
$1 \leq i \leq q-1$,
there exist constants $K_i=K(y_i, c_i, y_{i+1})$ and $N_i \geq 0$ such that :
$$|y_i^{n_i} c_i y_{i+1}^{n_{i+1}}| \geq n_i \|y_i\| + n_{i+1} \|y_{i+1}\| + K_i, $$
Moreover, if $n_i, n_{i+1} \geq N_i$
neither of the cyclically reduced conjugates 
$\tilde{y}_i= w_i y_i w_{i}^{-1}$, 
$\tilde{y}_i+1= w_{i+1} y_{i+1} w_{i+1}^{-1}$
is completely cancelled in the reduced product
$$y_i^{n_i} c_i y_{i+1}^{n_{i+1}}= w_i^{-1} y_i^{n_i} w_{i} c_i w_{i+1}^{-1} y^{n_{i+1}}_{i+1} w_{i+1}.$$

We now prove the proposition by induction.
\begin{enumerate}
\item
The case for $q=1$ is trivial while the case for $q=2$ is shown in Lemma~\ref{lemma3}.
\item
Suppose the inequality holds for $q=s$. 
In other words:
we can find constants $N \geq N_{s-1}$ and $K$ such that for  $n_i \geq N$ 
$$|c_0 y_1^{n_1} c_1 y_2^{n_2} c_2 ...c_{s-1} y_s^{n_s} c_s| \geq \sum\limits_{i=1}^{s} n_i \|y_i\| + (s-1)K$$
and $\tilde{y}_s^{n_s}$ is not completely cancelled in the reduced procedure.

%

\end{enumerate}

In particular, 
given that in each inductive step
the constants
$N$ and 
$K'$ depends only on the triplets 
$(y_i, c_i, y_{i+1})$, for $1 \leq i \leq 
q-1$
one can in fact deduce the final cancellation bound $(q-1)K_0$ based on just the family $\cal{F}$.  
In other words, the cancellation bound $K_0$ doesn't depend on the exponents $n_i$'s, 
once they are bigger than $N_0 := N$.

\end{proof}

\begin{rem}
In addition if $(y_q, c_qc_0, y_1) \in \cal{F}$,
similarly to what is done in the last proof, we may apply the same technique to the triplet $(y_q, c_qc_0, y_1)$ and obtain the following estimate for cyclical length of $w$ 
(again assuming $n_i \geq N_0$ for all exponents $n_i$): 
$$\| w \| \geq \sum\limits_{i=1}^{q} n_i \| y_i \| + q K_0$$
\end{rem}

\subsection{Cancellation bounds for $\cal T$-products}

${}^{}$

Let $\cal{T} \subset F_n \smallsetminus \{1\}$ be a finite set. We say that a product 
\begin{equation}
\label{T-word}
W = W(w_i, y_i) = w_0 y_1 w_1 y_2 w_2 \ldots w_{q-1} y_q w_q
\end{equation}
is a {\em $\cal{T}$-word} of {\em $\cal{T}$-length} 
$$|W|_\cal{T} = q\, ,$$

if $w_i \in F_n$ and $y_i \in \cal{T}$ for all indices $i$, 
and we say that the product $W$ is {\em $\cal{T}$-reduced} if 
$y_i^{-m} \neq w_i y_{i+1}^{m'} w_i^{-1}$ for any integers $m, m' \geq 0$.

For any $\cal{T}$-word $W$ as in (\ref{T-word}) and any multi-exponent 
\begin{equation}
\label{multi-exponent}
[n] = (n_1, n_2, \ldots n_q)  \in \N^q
\end{equation} 
we denote by $W^{[n]}$ the word
$$W^{[n]} = w_0 y_1^{n_1} w_1 y_2^{n_2}n w_2 \ldots w_{q-1} y_q^{n_q} w_q \, .$$
For any $n_0 \in \Z$ we write 
$$[n] \geq n_0$$
if $n_i \geq n_0$ holds for all components $n_i$ of $[n]$.

\begin{prop}
\label{T-word-length}
(a)
For any reduced $\cal{T}$-word $W$ as in (\ref{T-word}) and any basis $\cal A$ of $F_n$ there exist constants $K = K(W) \geq 0$ 
and $n_0 \in \Z$ 
such that for any multi-exponent $[n] \in \N^q$ as in (\ref{multi-exponent}), 
with $[n] \geq n_0$, 
the $\cal A$-lengths 
satisfy
$$|W^{[n]}|_{\cal A} \geq   \sum_{k = 1}^{q} n_k \|y_k\|_{\cal A} - K$$

\smallskip
\noindent
(b)
If all of the intermediate words $w_i$ in $W$ belong to a finite family 
$\cal W$, then the above constant $K$ can be taken to be equal to 
$(q-1) K_0$ for some constant $K_0 \geq 0$ which only depends on $\cal W$ and $\cal{T}$-length, but not on the multi-exponent $[n]$. 
Similarly, the constant $n_0$ can be chosen to depend only on $\cal W$.
\end{prop}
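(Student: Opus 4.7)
The plan is to derive Proposition \ref{T-word-length} directly from Proposition \ref{prop1}, by constructing in each case the appropriate finite family of triplets to which Proposition \ref{prop1} can be applied.

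For part (a), I fix the $\cal{T}$-reduced word $W = w_0 y_1 w_1 \cdots y_q w_q$ and form the finite family
$$\cal{F}(W) := \{(y_i, w_i, y_{i+1}) : 1 \leq i \leq q-1\} \subset F_n \times F_n \times F_n.$$
The $\cal{T}$-reducedness hypothesis translates into exactly condition (2) required of $\cal{F}$ before Proposition \ref{prop1}, namely $y_i^{-m_1} \neq w_i y_{i+1}^{m_2} w_i^{-1}$ for all $m_1, m_2 \geq 1$, while condition (1), $\|y_i\|_{\cal A} > 0$ and $\|y_{i+1}\|_{\cal A} > 0$, is automatic since $\cal{T} \subset F_n \smsm \{1\}$ and every non-trivial element of a free group has positive cyclic length. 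Applying Proposition \ref{prop1} to $\cal{F}(W)$ then supplies constants $N_0 \geq 0$ and $K_0$ (possibly negative) such that for every multi-exponent $[n] \geq N_0$ one has
$$|W^{[n]}|_{\cal{A}} \geq \sum_{k=1}^{q} n_k \|y_k\|_{\cal{A}} + (q-1) K_0 \, ,$$
and setting $K := \max(0, -(q-1)K_0)$ finishes part (a).

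For part (b), when the intermediate factors $w_i$ are further required to lie in a finite subset $\cal{W} \subset F_n$, I replace the $W$-dependent family $\cal{F}(W)$ by the single $W$-independent family
$$\cal{F}_{\cal{T}, \cal{W}} := \{(y, w, y') \in \cal{T} \times \cal{W} \times \cal{T} : y^{-m_1} \neq w\, y'^{m_2}\, w^{-1} \text{ for all } m_1, m_2 \geq 1\} \, ,$$
which is finite and depends only on $\cal{T}$ and $\cal{W}$. Every consecutive triplet inside any $\cal{T}$-reduced $\cal{T}$-word with intermediate factors in $\cal{W}$ automatically lies in $\cal{F}_{\cal{T}, \cal{W}}$, so a single application of Proposition \ref{prop1} to $\cal{F}_{\cal{T}, \cal{W}}$ yields constants $N_0$ and $K_0$ that are uniform in the choice of $W$ and of $[n]$; the claimed cancellation bound of the form $(q-1) K_0$ then follows.

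I expect no serious technical obstacle, since the statement is essentially a repackaging of Proposition \ref{prop1} in more convenient notation. The only minor care needed is to match the two slightly different formulations of the reducedness condition: the statement of Proposition \ref{T-word-length} uses $m, m' \geq 0$ while the family $\cal{F}$ of Proposition \ref{prop1} requires $m_1, m_2 \geq 1$. The additional boundary cases $m = 0$ or $m' = 0$ reduce to $1 \neq w y'^{m'} w^{-1}$ or $y^{-m} \neq 1$, both of which hold automatically from $\cal{T} \subset F_n \smsm \{1\}$, so the two conditions are effectively equivalent for the present purpose.
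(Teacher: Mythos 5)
Your argument matches the paper's own proof, which likewise forms a finite family of triplets (written there directly as $\{(y,w,y') \mid w \in \cal W,\, y, y' \in \cal T,\, \ldots\}$) and invokes Proposition~\ref{prop1} without further ado. The only small slip is in your reconciliation of the two reducedness conditions: at $m = m' = 0$ the $\cal T$-reduced condition as literally written would demand $1 \neq 1$, so that case must be tacitly excluded (the intended condition is ``$m, m' \geq 1$'' or ``not both zero''); this is a typo inherited from the paper and does not affect the substance of your, or the paper's, derivation.
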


\begin{proof}
For the given family $W = W(w_i, y_i)$ we set 
$$\cal{F}(W)=\{(y, w, y') \mid w \in \cal W, \,  y, y' \in \cal T,  \,\, y^{-m} \neq w y'^{n} w^{-1} \,\, {\rm if} \,\, m, n \geq 0\}
$$ 
Then our claim follows directly from Proposition~\ref{prop1}.
\end{proof}

\section{Graph-of-groups with trivial edge groups: Growth bounds}

In this section we will suppress the base point $v$ in the fundamental group $\pi_1 \cal G$ of a graph-of-groups $\cal G$ if it is immaterial, and write simply $\pi_1 \cal G$.

\begin{lem}\label{lemma5}
Let $\cal{G}$ be a graph-of-groups with trivial edge groups. 
Let $(W_i)_{i=1}^{+\infty} \subset 
\pi_{1}(\cal{G})$
be a family of cyclically reduced words on $\cal{G}$, where $W_i = v_0(i) t_1 v_1(i) t_2 ... t_q v_q(i)$.
If for some $1 \leq k \leq q$, the length of $v_k(i)$ under some 
(hence any) 
finite generating system
$\cal{B}_k$ of the vertex group $G_{v_k}$, i.e. $|v_k(i)|_{\cal{B}_k}$, grows quadratically with respect to $i$, then $\|W_i\|$ grows at least quadratically with respect to $i$.
\end{lem}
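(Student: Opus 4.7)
The plan is to exploit the free-product decomposition of $\pi_1(\cal G)$ that arises from the triviality of all edge groups. Choosing a maximal tree $T_0 \subseteq \Gamma(\cal G)$ and setting $S := \{t_e \mid e \in E(\Gamma) \smsm E(T_0)\}$, Bass--Serre theory yields the canonical isomorphism
\[
\pi_1(\cal G) \;\cong\; \bigl(\textstyle*_{v \in V(\Gamma)}\, G_v\bigr) * F(S).
\]
Since any two finite generating systems of $\pi_1(\cal G)$ give equivalent length functions up to a multiplicative constant, I will measure $|\cdot|$ and $\|\cdot\|$ with respect to the natural generating set $\cal A := \bigl(\bigsqcup_v \cal B_v\bigr) \cup S$ obtained by combining the given vertex-group generating sets with the free-part stable letters.

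The key observation is that in translating the Bass--Serre expression $W_i = v_0(i) t_1 v_1(i) \cdots t_q v_q(i)$ into free-product normal form, each $v_k(i)$ persists as an \emph{isolated} syllable in the factor $G_{v_k}$. Indeed, each stable letter $t_j$ with $e_j \notin E(T_0)$ becomes a syllable in $F(S)$, automatically separating its neighbouring vertex-group syllables; and each $t_j$ with $e_j \in E(T_0)$ disappears in $\pi_1(\cal G,T_0)$, but since $T_0$ is a tree it contains no loops, so the endpoints of $e_j$ are \emph{distinct} vertices of $\Gamma$ and the flanking elements $v_{j-1}(i) \in G_{v_{j-1}}$, $v_j(i) \in G_{v_j}$ automatically live in different free factors. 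Hence no non-trivial fusion of consecutive vertex-group elements occurs.

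In a free product with generating system equal to the disjoint union of factor generating systems, the $\cal A$-length of a reduced element equals the sum of the $\cal B_v$-lengths of its vertex-group syllables plus the number of its $F(S)$-syllables. Hence $|W_i|_{\cal A} \geq |v_k(i)|_{\cal B_k}$. For the cyclic length, the Bass--Serre cyclic reducedness of $W_i$ transfers, up to the possible cyclic fusion of the endpoint syllables $v_0(i)$ and $v_q(i)$ into a single syllable of the common vertex group $G_{v_0}$, to cyclic reducedness in the free-product sense. For interior indices $1 \leq k \leq q-1$ the syllable $v_k(i)$ is untouched by this fusion, so $\|W_i\|_{\cal A} \geq |v_k(i)|_{\cal B_k}$, and the quadratic growth of $|v_k(i)|_{\cal B_k}$ translates directly into the claimed quadratic growth of $\|W_i\|$.

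The main obstacle I anticipate is the boundary case $k = q$, where one must rule out a conspiratorial cancellation in the seam product $v_q(i)\, v_0(i)$ that could annihilate the quadratic growth of $v_q(i)$. I expect to control this by invoking the Bass--Serre cyclic reducedness of $W_i$: in the critical configuration $t_q = t_1^{-1}$ it forbids $v_q(i) v_0(i)$ from lying in $f_{e_q}(G_{e_q}) = \{1\}$, and in the remaining configurations $t_q \neq t_1^{-1}$ the other syllables of $W_i$ insulate $v_k(i)$ from the endpoint fusion, so that the bound $\|W_i\|_{\cal A} \geq |v_k(i)|_{\cal B_k} - C$ persists for a constant $C$ depending only on the path type.
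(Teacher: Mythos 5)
Your core argument matches the paper's: the paper likewise chooses the generating system $\cal B = \bigcup_v \cal B_v \cup \{t_e\}$ coming from the free-product decomposition $\pi_1(\cal G, T_0) \cong \left(*_v G_v\right) * F(S)$, notes that the $G_v$ are free factors, and deduces the displayed inequality
$$\|W_i\|_{\cal B} \;\geq\; \sum_{k=1}^{q-1} |v_k(i)|_{\cal B_k} \,+\, |v_q(i)\,v_0(i)|_{\cal B_0}\,,$$
from which quadratic growth of any \emph{interior} syllable $v_k(i)$, $1 \leq k \leq q-1$, forces quadratic growth of $\|W_i\|$. Up to that point the two proofs are essentially identical.

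Where your proposal goes wrong is in the final paragraph, the boundary case $k=q$. Cyclic reducedness of $W_i$ in the Bass--Serre sense only guarantees $v_q(i)\,v_0(i) \neq 1$ (since $f_{e_q}(G_{e_q})=\{1\}$); it places no lower bound at all on $|v_q(i)\,v_0(i)|_{\cal B_0}$. And the ``insulation'' you invoke for $t_q \neq t_1^{-1}$ does not exist: $v_q(i)$ and $v_0(i)$ lie in the \emph{same} free factor $G_{v_0}$, so the cyclic conjugation used to compute $\|W_i\|_{\cal B}$ always merges them into a single syllable $v_q(i)\,v_0(i)$, regardless of what the other syllables are. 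Concretely, take $\cal G$ a one-vertex graph with $G_v = \langle a,b\rangle$ and a single loop edge $e$, set $W_i = a^{-i^2} b\, t_e\, a^{i^2}$: this is reduced and (since $t_1 \neq t_q^{-1}$, vacuously) cyclically reduced, and $|v_q(i)| = i^2$ grows quadratically, yet $W_i$ is conjugate to $b\, t_e$, so $\|W_i\|_{\cal B}$ is constant. Thus the claimed estimate $\|W_i\|_{\cal A} \geq |v_q(i)|_{\cal B_q} - C$ is false, and there is a genuine gap in your treatment of $k=q$. I note, however, that the paper's own lemma statement allows $1 \leq k \leq q$, while its proof inequality controls only the interior syllables and the seam product $v_q(i)\,v_0(i)$; the lemma is correct as read with $k$ interior (or with the quadratic-growth hypothesis placed on $v_q(i)\,v_0(i)$), and this is all that is needed in Proposition \ref{free-product-growth}, where the growing syllable sits between $t_e$ and $t_e^{-1}$.
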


\begin{proof}
As shown in Section~\ref{graphofgroups} that the union of 
generating systems 
of each vertex group $\bigcup\limits_{v \in V(\cal{G})} \cal{B}_{v}$ forms 
a subset of a generating system 
$\cal{B}$ of $\pi_1(\cal{G})$,
where each vertex group is a free factor.

We obtain:
$$\|W_i\|_{\cal B} \geq \sum\limits_{k=1}^{q-1}|v_k(i)|_{\cal{B}_{k}}+ |v_q(i) v_0(i)|_{\cal{B}_0}$$
It follows immediately from the conditions that at least one of $|v_k(i)|_{\cal{B}_k}$ grows quadratically and that $W_i$ is cyclically reduced that the cyclically reduced length of $W_i$ grows at least quadratically.

\end{proof}

Recall that a graph-of-groups $\cal G$ is called {\em minimal} if it doesn't contain a proper subgraph $\cal G'$ such that the inclusion induces an isomorphism on the fundamental groups. For a finite graph-of-groups with trivial edge groups this amounts to requiring that any vertex of valence 1 has a non-trivial vertex group.

\begin{lem}\label{lemma6}
Let $\cal{G}$ be a minimal graph-of-groups with trivial edge groups,
Then for any edge e
with terminal vertex $v = \tau(e)$ that has non-trivial vertex group $G_v$ one can find a cyclically reduced word $w \in \pi_1(\cal{G})$ with underlying path that runs subsequently through the edge $e$ 
and directly after through $\bar e$.
\end{lem}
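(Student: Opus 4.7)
The plan is to explicitly construct a suitable closed word $w$ based at the initial vertex $u := \tau(\bar e)$, case-analyzed according to whether $G_u$ is trivial. Throughout, fix a nontrivial element $r \in G_v$ (available by hypothesis), so that the subword $t_e \cdot r \cdot t_{\bar e}$ already realizes the required edge pattern $e, \bar e$; the task reduces to embedding this subword into a cyclically reduced closed word based at $u$.

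In the first case, $G_u \neq 1$, pick any nontrivial $s \in G_u$ and set $w := s \cdot t_e \cdot r \cdot t_{\bar e}$. Reducedness is immediate since $r \neq 1$ lies outside the trivial image $f_e(G_e) = \{1\}$. For cyclic reducedness, the cyclic check at the wrap-around junction (where $t_1 = t_e$ and $t_q = t_{\bar e} = t_1^{-1}$) amounts to verifying that $r_q \cdot r_0 = 1 \cdot s = s$ lies outside the trivial edge-group image, which holds since $s \neq 1$.

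In the second case, $G_u = 1$, we have $u \neq v$ (so $e$ is not a loop), and minimality rules out $u$ being a leaf with trivial vertex group, hence $u$ has valence at least $2$. Pick an edge $e' \neq e$ with $\tau(\bar{e'}) = u$, and set $f_1 := e'$. I will construct a closed word $\gamma$ at $u$ whose first edge is $f_1$ and whose last edge is not $\bar e$, then take $w := t_e \cdot r \cdot t_{\bar e} \cdot \gamma$. To build $\gamma$, launch a reduced walk from $u$ along $f_1$; at each subsequent vertex with trivial group, minimality guarantees valence at least 2, hence a non-backtracking continuation. By finiteness of $\Gamma(\cal G)$, the walk either (i) reaches a vertex $u_k$ with $G_{u_k} \neq 1$, at which point I insert a nontrivial $s_k \in G_{u_k}$ and retrace in reverse to produce $\gamma := t_{f_1} \cdots t_{f_k} \cdot s_k \cdot t_{\bar{f_k}} \cdots t_{\bar{f_1}}$ with last edge $\bar{e'} \neq \bar e$, or (ii) returns to $u$ without ever visiting $v$ (a visit to $v$ would trigger case (i)), yielding $\gamma := t_{f_1} \cdots t_{f_m}$ where the last edge $f_m$ cannot be $\bar e$ (otherwise the second-to-last vertex would be $v$, contradicting case (ii)).

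It remains to verify that $w$ is cyclically reduced: the junction between $t_{\bar e}$ and $t_{f_1}$ is non-backtracking since $f_1 = e' \neq e$; the internal junctions of $\gamma$ are non-backtracking either by the walk construction (case (ii)) or by our choice of reflect element $s_k$ at the single backtracking position (case (i)); and the cyclic junction between the final edge of $\gamma$ and the initial $t_e$ is non-backtracking by the bounds above. The main obstacle is the bookkeeping in Case B, which the walk-and-reflect construction handles uniformly using minimality and finiteness of $\Gamma(\cal G)$.
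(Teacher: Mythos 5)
Your Case~1 ($G_u\neq 1$) is correct, and your overall strategy --- anchor the subword $t_e\, r\, t_{\bar e}$ by either a non-trivial vertex-group element or a circuit, using the minimality observation that trivial-group vertices have valence $\geq 2$ --- is the same idea the paper uses. But there is a genuine gap in Case~2, alternative~(ii): you assert that if the non-backtracking walk from $u$ never meets a vertex with non-trivial group, then it must return to $u$. Finiteness of $\Gamma(\cal G)$ only guarantees that the walk revisits \emph{some} vertex, not $u$ itself; generically it closes up at an intermediate vertex $u_j$ with $j>0$, producing a circuit $\omega = f_{j+1}\cdots f_m$ based at $u_j$ and a non-backtracking path $\gamma' = f_1\cdots f_j$ from $u$ to $u_j$, and then your $\gamma := t_{f_1}\cdots t_{f_m}$ is not a closed word at $u$. (A concrete instance: $u$ joined by an edge to one vertex of a triangle, all of those four vertex groups trivial.)

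The repair is to rebase at $u_j$ and set $w := \bar\gamma'_*\; t_e\, r\, t_{\bar e}\; \gamma'_*\; \omega_*$. One must then also verify the junction between $\omega_*$ and $\bar\gamma'_*$, i.e.\ that $f_m\neq f_j$; this follows because $u_0,\dots,u_{m-1}$ are distinct (take $m$ to be the first return time), so $u_{m-1}\neq u_{j-1}$ and hence $f_m\neq f_j$. With that fix your proof matches the paper's: the paper removes $e$ and notes, by minimality, that the component of $\iota(e)$ in $\Gamma\smallsetminus\{e\}$ contains either a circuit $\omega$ or a vertex $v'$ with $G_{v'}\neq 1$, and then forms $\gamma_*^{-1}\,t_e\, r\, t_{\bar e}\,\gamma_*\,\omega_*$ (respectively $\gamma_*^{-1}\,t_e\, r\, t_{\bar e}\,\gamma_*\, u'$) for a path $\gamma$ from $\iota(e)$ to the base of $\omega$ (respectively to $v'$) --- exactly the ``path out, twist, path back, around the circuit'' pattern, without the unjustified assumption that the circuit passes through $\iota(e)$.
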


\begin{proof}
Because $\cal{G}$ is a minimal graph-of-groups, each connected component of the graph $\Gamma'$ obtained from $\Gamma(\cal G)$ by removing the edge $e$ must contain either a circuit $\omega$ or else a vertex $v'$ with non-trivial vertex group $G_{v'}$. Let $\gamma$ be a path in $\Gamma'$ which connects $\iota(e)$ either to the initial (= terminal) vertex of some such $\omega$, or else to $v'$. Let $v \in G_v \smallsetminus \{1\}$ and $u \in G_{v'} \smallsetminus \{1\}$ (in the second case only). Then $\gamma_*^{-1} e v \bar e \gamma_* \omega_*$ (in the first case) or $\gamma_*^{-1} e v \bar e \gamma* u$ (in the second one) are the words we are looking for, where $\gamma_*$ and $\omega_*$ denote the sequence of stable letters $t_{e_i}$ defined by the edges $e_i$ of $\gamma$ and $\omega$ respectively.
\end{proof}

\begin{prop}
\label{free-product-growth}
Let $\cal G$ be a minimal graph-of-groups with trivial edge groups, and let $H: \cal G \to \cal G$ be a graph-of-groups automorphism which acts trivially on the underlying graph $\Gamma = \Gamma(\cal G)$. 

Let $v$ be a vertex of $\Gamma$, with vertex group automorphism $H_v: G_v \to G_v$.
For some edge $e$ with terminal vertex $\tau(e) = v$ denote by $\delta_e \in G_v$ the correction term of $e$.

Assume that for some $g \in G_v$ there exist a constant $C>0$ and a strictly increasing sequence of numbers $n_i \in \Z$
which satisfy:
$$|H_{v}^{(n_i)}(\delta_{e}^{-1}) H_v^{n_i}(g) H_{v}^{(n_i)}(\delta_{\bar e}^{-1})^{-1}| \geq C n_{i}^2$$
Then the induced outer automorphism $\hat{H}$ of $\pi_1\cal G$ has at least quadratic growth.
\end{prop}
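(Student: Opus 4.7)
The plan is to exhibit an explicit conjugacy class $[W] \subset \pi_1 \cal G$ whose cyclically reduced length under iteration of $\hat H$ grows at least quadratically along the given subsequence $(n_i)$. Since the hypothesis only makes sense with both $\delta_e$ and $\delta_{\bar e}$ lying in $G_v$, the natural setting is that $e$ is a loop at $v$ (i.e.\ $\iota(e) = \tau(e) = v$); the general case can be handled by a minor variant, using the path supplied by Lemma~\ref{lemma6} to close $e$ up to a loop at $v$ via $\bar e$.

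I would first choose $W = g\, t_e \in \pi_1(\cal G, v)$. Using the defining relation $H_*(t_e) = \delta_{\bar e}\, t_e\, \delta_e^{-1}$ together with $H_*|_{G_v} = H_v$, a straightforward induction on $n$ gives
\[
H_*^n(W) \;=\; H_v^n(g) \cdot \bigl[ H_v^{(n)}(\delta_{\bar e}^{-1}) \bigr]^{-1} \cdot t_e \cdot H_v^{(n)}(\delta_e^{-1}),
\]
where $H_v^{(n)}(x)$ denotes the telescoping product $x \cdot H_v(x) \cdot H_v^2(x) \cdots H_v^{n-1}(x)$, in agreement with the operator appearing in the hypothesis. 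This is a reduced word of $\cal G$-length $1$; cyclically conjugating by the rightmost factor transforms it into
\[
U_n \cdot t_e, \qquad \text{where } U_n \;=\; H_v^{(n)}(\delta_e^{-1}) \cdot H_v^n(g) \cdot \bigl[H_v^{(n)}(\delta_{\bar e}^{-1})\bigr]^{-1} \;\in\; G_v.
\]
The triviality of all edge groups of $\cal G$ ensures that $U_n\, t_e$ is in fact cyclically reduced whenever $U_n \neq 1$, and the hypothesized lower bound $|U_{n_i}|_{\cal B_v} \geq C\, n_i^2$ guarantees this for all large $i$.

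To conclude, I would fix a basis $\cal B_v$ of the free group $G_v$ and extend it (via the free-factor description of $\pi_1 \cal G$ noted immediately after Serre's theorem) to a basis $\cal B$ of $\pi_1 \cal G$. Applying Lemma~\ref{lemma5} to the sequence of cyclically reduced words $W_i := U_{n_i}\, t_e$ yields $\|W_i\|_{\cal B} \geq |U_{n_i}|_{\cal B_v}$, and combining this with the hypothesis gives
\[
\bigl\| H_*^{n_i}(W) \bigr\|_{\cal B} \;\geq\; C\, n_i^2.
\]
Since $(n_i)$ is strictly increasing (so $n_i \geq i$), this subsequence lower bound is already of the form $C\, i^2$, showing that $[W]$ grows at least quadratically under iteration of $\hat H$, as required.

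The main obstacle is the inductive computation in the second step: one must carefully track how the correction-term factors $\delta_e$ and $\delta_{\bar e}$ accumulate on either side of $t_e$, and verify that the growing product really telescopes into the operator $H_v^{(n)}$ exactly as it appears in the hypothesis. Once this explicit normal form is secured, the appeal to Lemma~\ref{lemma5} is essentially immediate, since triviality of the edge groups blocks any cancellation between the vertex-group factor $U_n$ and the stable letter $t_e$, so all of the (at least) quadratic growth in $|U_{n_i}|_{\cal B_v}$ survives into the cyclic length of $H_*^{n_i}(W)$ in $\pi_1 \cal G$.
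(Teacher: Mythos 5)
Your proof is correct and takes essentially the same overall strategy as the paper --- exhibit a concrete conjugacy class in $\pi_1\cal G$, compute $H_*^n$ explicitly, and feed the resulting cyclically reduced word into Lemma~\ref{lemma5} --- but you realize it with a different, and in fact cleaner, explicit word. The paper instead invokes Lemma~\ref{lemma6} to produce a cyclically reduced word $w$ whose underlying path traverses $e$ followed by $\bar e$ and which contains $t_e\, g\, t_e^{-1}$ as a subword; after $k$ iterations the paper reads off a subword $t_e\, H_v^{(k)}(\delta_e^{-1})\, H_v^k(g)\, H_v^{(k)}(\delta_{\bar e}^{-1})^{-1}\, t_e^{-1}$ and applies Lemma~\ref{lemma5} to that. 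Your choice $W = g\,t_e$ (with $e$ a loop at $v$) is a $\cal G$-length-$1$ word, and your explicit normal form for $H_*^n(W)$ followed by the cyclic shift to $U_n\, t_e$ reproduces the hypothesized quantity on the nose, which makes the appeal to Lemma~\ref{lemma5} immediate. I would note two things. First, your throwaway remark that ``the general case can be handled by a minor variant ... to close $e$ up to a loop'' is too vague to carry weight as written, but it is also unnecessary: as you observe, the hypothesis $H_v^{(n_i)}(\delta_{\bar e}^{-1})$ already forces $\delta_{\bar e}\in G_v$, i.e.\ $e$ to be a loop at $v$, so there is no residual ``general case'' to treat under the statement as given. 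Second, if one actually carries out the induction for the paper's word $t_e\, g\, t_e^{-1}$, the inner block comes out as $H_v^{(k)}(\delta_e^{-1})\, H_v^k(g)\, H_v^{(k)}(\delta_e^{-1})^{-1}$, with $\delta_e$ on \emph{both} sides (the $\delta_{\bar e}$ contributions land \emph{outside} the $t_e\dots t_e^{-1}$ block); so your loop-based word is in fact the one that genuinely produces the mixed $\delta_e/\delta_{\bar e}$ expression appearing in the proposition's hypothesis. In short: correct, and a cleaner instantiation of the same idea.
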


\begin{proof}
%
If follows immediately from Lemma~\ref{lemma6} that one can find cyclically reduced word $w \in \pi_1\cal G$ with underlying path that runs through the edge $e$ and subsequently through $\bar e$, and $w$ contains the word $t_e g t_{e}^{-1}$ as subword.

As a consequence 
the iteration of $H_{*}: \Pi(\cal{G}) \rightarrow \Pi(\cal{G})$ on $w$ will give words $H_*^k(w)$ that contain 
$$t_e H_{v}^{(k)}(\delta_{e}^{-1}) H_v^{k}(g) H_{v}^{(k)}(\delta_{\bar e}^{-1})^{-1} t_{e}^{-1}$$
as subword.




Hence it follows from the assumed inequality and from Lemma~\ref{lemma5} that the subsequence $\|H_{*}^{n_i}(w)\|$ grows at least quadratically. Therefore the conjugacy class of $w$ grows at least quadratically under the iteration of $H_{*}$, which implies that the induced outer automorphism $\hat{H}$ grow at least quadratically. 
\end{proof}

\section{Dehn twists}
This section is dedicated to translate our cancellation propositions into graph-of-groups language.

Through the whole section, we always assume that the free group $F_n$ is of rank $n \geq 2$.

We first prove the following Proposition.

\begin{prop}
\label{Dehn-twist-lifts+}

Let $F_n$ be a free group
with rank $n \geq 2$, 
and let ${\cal{D}} \in \Aut(F_n)$ be a Dehn twist automorphism 
which is represented by an efficient Dehn twist. Then 
we have:


\smallskip
\noindent
(1)
There exists a finite set of ``twistors'' $\cal{T} = \{z_1, \ldots z_r\} \subset F_n \smallsetminus \{1\}$, 
such that for any element $w \in F_n$ there exists a (non-unique) ``$\cal T$-decomposition'' of $w$ as product
\begin{equation}
\label{T-product}
w = w_0 w_1 w_2 \ldots w_{q-1} w_q = w_0 y_{1}^0 w_1 y_{2}^0 w_2 \ldots w_{q-1} y_{q}^0 w_q
\end{equation}
with $w_i \in F_n$ and 
$y_i$ 
or $y_i^{-1}$
in $\cal{T}$ such that 
$$\cal{D}^n(w) = w_0 y^n_1 w_1 y^n_2 w_2 \ldots w_{q-1} y^n_q w_q \, ,$$
and $y_i^{-m} \neq w_i y_{i+1}^{m'} w_i^{-1}$ for any integers $m, m' \geq 0$. 

\smallskip
\noindent
(2)
The rank of the subgroup of $F_n$ which consists of all elements fixed by $\cal{D}$ satisfies:
$$ {\rm rk}({\rm Fix}(\cal{D})) \geq 2$$

\end{prop}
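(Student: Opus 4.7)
The plan is to fix an efficient Dehn twist representative $D : \cal{G} \to \cal{G}$ of $\cal{D}$ on some graph-of-groups $\cal{G}$ with $\pi_1(\cal{G}, v_0) \cong F_n$, whose existence is guaranteed by Theorem~\ref{Efficient}, and then translate the action of $D_*$ on reduced words of $\Pi(\cal{G})$ into the desired $\cal{T}$-decomposition of elements of $F_n$.

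For part~(1), I take $\cal{T}$ to be the finite set of nontrivial twistor images $f_e(z_e)$, viewed as elements of $G_{\tau(e)} \subset F_n$, ranging over the finitely many edges $e$ of $\Gamma(\cal{G})$ with $z_e \neq 1$. Given $w \in F_n$, Proposition~\ref{reduced-words-existence} lets me write $w$ as a reduced $\Pi(\cal{G})$-word $w = r_0 t_{e_1} r_1 \cdots t_{e_q} r_q$. Since $D_*(t_e) = t_e f_e(z_e)$ by Remark~\ref{rem-twistor} and $D_*$ is the identity on vertex group elements, iteration gives $D_*^n(t_e) = t_e f_e(z_e)^n$, hence
$$D_*^n(w) \;=\; r_0 \, t_{e_1} f_{e_1}(z_{e_1})^n \, r_1 \, t_{e_2} f_{e_2}(z_{e_2})^n \, r_2 \cdots t_{e_q} f_{e_q}(z_{e_q})^n \, r_q.$$
Setting $w_0 := r_0 t_{e_1}$, $y_i := f_{e_i}(z_{e_i})$, $w_i := r_i t_{e_{i+1}}$ for $1 \leq i \leq q-1$, and $w_q := r_q$ then produces the desired $\cal{T}$-decomposition. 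Any edge whose twistor is trivial is absorbed into the adjacent $w_i$ and dropped from the indexing.

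The substantive point is verifying the $\cal{T}$-reducedness $y_i^{-m} \neq w_i y_{i+1}^{m'} w_i^{-1}$ for $m, m' \geq 1$. Using the Bass-Serre relation $t_e f_e(g) t_e^{-1} = f_{\bar e}(g)$, the right-hand side is rewritten as an element of $G_{\tau(e_i)}$, and I split into two cases. When $e_{i+1} \neq \bar e_i$, the edges $e_i$ and $\bar e_{i+1}$ are distinct but share terminal vertex $\tau(e_i)$, and the putative equality would produce a conjugation $f_{e_i}(z_{e_i}^m) \sim f_{\bar e_{i+1}}(z_{\bar e_{i+1}}^{m'})$ in $G_{\tau(e_i)}$ with $m, m' \geq 1$, directly violating efficiency condition~(5) of Definition~\ref{efficient-D-twists-defn}. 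In the backtrack case $e_{i+1} = \bar e_i$, the equality reduces to $f_{e_i}(z_{e_i}^{-m}) = r_i f_{e_i}(z_{e_i}^{-m'}) r_i^{-1}$. Since no nontrivial element of a free group is conjugate to its own inverse, this forces $m = m'$, whence $r_i$ centralizes $y_i$; by efficiency condition~(3) the subgroup $f_{e_i}(G_{e_i})$ is a maximal cyclic subgroup and hence coincides with its own centralizer in the ambient free group, so $r_i \in f_{e_i}(G_{e_i})$, contradicting the reducedness of $w$ at the backtrack.

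Part~(2) is then immediate from the earlier machinery: Proposition~\ref{fixed-elements}(b) identifies the $D_{*v_0}$-fixed subgroup of $\pi_1(\cal{G}, v_0)$ with the vertex group $G_{v_0}$, and Proposition~\ref{vertex-rank} asserts $\mathrm{rk}(G_{v_0}) \geq 2$ for every vertex of an efficient Dehn twist representing an automorphism of $F_n$ with $n \geq 2$. The main obstacle I anticipate is the bookkeeping of edge orientations and twistor signs in the $\cal{T}$-reducedness argument, especially in the backtrack case where one must carefully combine reducedness of $w$, the no-proper-power condition, and the centralizer structure of free groups to rule out all conjugacies $y_i^{-m} = r_i y_i^{-m'} r_i^{-1}$ even when $m \neq m'$.
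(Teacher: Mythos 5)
Your proof follows the same overall strategy as the paper: pick an efficient Dehn twist representative $D:\cal G\to\cal G$, read off the twistor set $\cal T$ from the edges, split a reduced $\Pi(\cal G)$-word for $\theta(w)$ into pieces containing one stable letter each, and verify $\cal T$-reducedness from the efficiency conditions; part (2) is then immediate from Propositions~\ref{fixed-elements}(b) and \ref{vertex-rank}. Where you diverge from the paper is in actually carrying out the $\cal T$-reducedness verification: the paper merely asserts that the inequalities $y_i^{-m}\neq w_i y_{i+1}^{m'}w_i^{-1}$ ``follow directly'' from the no-positively-bonded condition~(5), but that condition only compares \emph{distinct} edges with common terminal vertex, so it handles the case $e_{i+1}\neq\bar e_i$ and is silent on the backtrack case $e_{i+1}=\bar e_i$. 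You correctly notice this and supply the missing argument there (conjugate powers of the same twistor must have equal exponents, the centralizer is $f_{e_i}(G_{e_i})$ by the no-proper-power condition~(3), and then $r_i\in f_{e_i}(G_{e_i})$ contradicts reducedness of the original word). This is a genuine completion of the paper's sketch, not a detour. Two small remarks: the justification ``no nontrivial element of a free group is conjugate to its own inverse'' is slightly misleading in your backtrack case since both exponents already have the same sign — what you really use is that $g^a$ conjugate to $g^b$ with $g\neq 1$ forces $a=b$ (equal cyclic lengths give $|a|=|b|$, and only then does the no-conjugate-to-inverse fact rule out $a=-b$); and you do not dwell on the fact that $w_i=r_it_{e_{i+1}}$ and $y_i=f_{e_i}(z_{e_i})$ live in $\Pi(\cal G)$ rather than in $\pi_1(\cal G,v_0)$, so that passing to elements of $F_n$ requires the maximal-tree identification that the paper spells out — your equations in $\Pi(\cal G)$ do transport correctly after conjugating by the tree paths $T(\gamma_{\tau(e_i)})$, but this bookkeeping step deserves a sentence.
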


\begin{proof}
By definition ${\cal D}$ is represented by an efficient Dehn twist $D: \cal G \to \cal G$ on a graph-of-groups $\cal G$ with fundamental group $\pi_1 \cal G$ isomorphic to $\FN$. 
We pick a vertex $v_0$ of $\cal G$ as base point and specify the above isomorphism to be $\theta: \FN \overset{\cong}{\longrightarrow} \pi_1(\cal G, v_0)$. 
The automorphism $\cal D$ fixes the $\theta^{-1}$-image of the vertex group $G_{v_0}$ of $\cal G$ elementwise, and since efficient Dehn twists have all vertex groups of rank $\geq 2$ (see Proposition \ref{vertex-rank}), this proves claim (2) of the proposition.

In order to obtain claim (1), we chose a maximal tree $Y$ in the graph $\Gamma$ and identify in the usual fashion
each vertex group $G_v$ canonically with a subgroup of $\pi_1(\cal G, v_0)$ by connecting $v_0$ to $v$ through a simple path in $Y$. 
Similarly, for any edge $e$ the stable letter $t_e \in \Pi(\cal G)$ gives rise to an element in $\pi_1(\cal G, v_0)$ by connecting $v_0$ to the terminal vertices of $e$ through simple paths in $Y$ (which gives $1 \in \pi_1(\cal G, v_0)$ if and only if $e$ belongs to $Y$).

The collection $\cal T$ is then given by the twistors $z_e$ of the edges $e \in E^+(\cal G)$ 
(for some orientation $E^+(\cal G) \subset E(\cal G)$, see subsection \ref{graphofgroups}). For any $w \in \FN$ the collection of factors $w_i$ in the $\cal T$-product decomposition (\ref{T-product}) is obtained by writing $\theta(w)$ as a reduced word $v_0 t_1 v_1 t_2 v_3 \ldots v_{q-1} t_q v_q$ in $\pi_1(\cal G, v_0)$ (see Proposition \ref{reduced-words-existence}), and by applying $\theta^{-1}$ to $v_0$ or to any of the $t_i v_i$ for $i = 1, \ldots, q$. The equality $\cal{D}^n(w) = w_0 y^n_1 w_1 y^n_2 w_2 \ldots w_{q-1} y^n_q w_q $ is a immediate consequence of the definition of a Dehn twist automorphism,
see Remark \ref{rem-twistor}. The inequalities $y_i^{-m} \neq w_i y_{i+1}^{m'} w_i^{-1}$, for any integers $m, m' \geq 0$, follow directly from the condition that the efficient Dehn twist $D$ does not have twistors that are positively bonded, see Definition \ref{efficient-D-twists-defn} (5).

\end{proof}

\begin{rem}
The representation of $w \in \FN$ as {\em $\cal T$-product} as given in Proposition \ref{Dehn-twist-lifts+} is not unique. However, it follows from the proof that the sequence of twistors $y_i$ is well defined, and thus also the $\cal T$-length $|w|_{\cal T}$ of $w$. The intermediate words $w_i$ are well defined up to replacing them by $y_i^p w_i y_{i+1}^q$ for some $p, q \in \Z$.
\end{rem}

\begin{defn}
\label{cyclically-T}
A $\cal T$-product representative of $w \in \FN$ as in Proposition \ref{Dehn-twist-lifts+} is called {\em cyclically $\cal T$-reduced} if $y_q^{-m} \neq w_q w_0 y_{1}^{m'} w_0^{-1} w_q^{-1}$ for any integers $m, m' \geq 0$.
\end{defn}

\begin{defn-rem}
\label{D-reduced}
${}^{}$

\noindent
(1)
Let $\cal D$ and $\cal T$ be as in Proposition \ref{Dehn-twist-lifts+}. Assume that $D: \cal G \to \cal G$ is an efficient Dehn twist representative of $\cal D$, with respect to some identification isomorphism $\theta: \FN \to \pi_1(\cal G, v_0)$ for some vertex $v_0$ of $\cal G$.

For some element $w \in F_n$ let $W \in \Pi(\cal{G})$ be the corresponding element in the Bass group of $\cal G$, i.e. $W = \theta(w) \in \pi_1(\cal G, v_0) \subset \Pi(\cal G)$.

We say that $w$ is {\em $\cal D$-reduced} if $W$ is $D$-reduced.

\smallskip
\noindent
(2)
Recall that $\theta(w)$ is called $D$-reduced if the element $W$ is reduced as word in $\Pi(\cal G)$, and if its $\cal G$-length can not be shortened by $D$-conjugation, i.e. by passing over to a word $V^{-1} W D(V) \in \pi_1(\cal G, v_1) \subset \Pi(\cal G)$, for some vertex $v_1$ of $\cal G$. It follows from our considerations in the proof of Proposition \ref{Dehn-twist-lifts+} that in this case $w$ is $\cal T$-reduced and cyclically $\cal T$-reduced.

\smallskip
\noindent
(3)
If $W$ is not $D$-reduced, then we can follow the procedure 
indicated in Definition \ref{Hreduced-defn}  and Remark \ref{Hreduced}
to perform iteratively elementary $D$-reductions until we obtain a new word $W' \in \pi_1(\cal G, v_1) \subset \Pi(\cal G)$ which is $D$-reduced, for a possibly different vertex $v_1$. In this case we 
change our identification isomorphism $\theta$ corresponding to the performed $D$-reductions to obtain a new identification $\theta': \FN \to \pi_1(\cal G, v_1)$ with respect to which $w$ is $D$-reduced.

\smallskip
\noindent
(4)
Recall that a $D$-reduced word $W \in \Pi(\cal G)$ is $D$-zero if and only if $W$ has $\cal G$-length 0, or in other words, $W$ is contained in some vertex group $G_v$. In this case we say that a word $w \in \FN$ with $\theta(w) = W$ is {\em $\cal D$-zero}.

We thus note that any $w \in \FN$ which is $\cal D$-reduced (possibly with respect to a modified identification isomorphism $\theta'$ an in (3) above) but not $\cal D$-zero is $\cal T$-reduced, cyclically $\cal T$-reduced, and of $\cal T$-length $|w|_{\cal T} \geq 1$.



%
\end{defn-rem}

Let $\cal{D} \in Aut(F_n)$ be a Dehn twist automorphism, 
recall that for any element $w \in F_n$ and any integer $n \geq 1$ we denote by $\cal{D}^{(n)}(w)$ the {\em iterated product}, defined through:
$$\cal{D}^{(k)}(w) := w \, \cal{D}(w) \, \cal{D}^2(w) \, \ldots \, \cal{D}^{k-1}(w),$$
 
and the {\em partial iterated product} is given by 
$$\cal{D}^{(k_1, k_2)}(w) := \cal{D}^{k_1}(w) \, \cal{D}^{k_{1}+1}(w) \, \ldots \, \cal{D}^{k_2}(w).$$

\begin{prop}\label{quadratic1}
Let $\cal{D} \in Aut(F_n)$ be a Dehn twist automorphism, and denote by $\cal{T}$ the set of ``twistors" defined in Proposition~\ref{Dehn-twist-lifts+}. 
Let $w \in F_n$ be a $\cal{D}$-reduced word which is not $\cal{D}$-zero, i.e. $|w|_{\cal{T}} \neq 0$.
Then the combinatorial length of $\cal{D}^{(k)}(w)$ has quadratic growth with respect to $k$, i.e. one can find constant $C_1$ such that 
$$|\cal{D}^{(k)}(w)| \simeq C_1 k^2. $$
\end{prop}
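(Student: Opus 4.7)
The plan is to represent $\cal D^{(k)}(w)$ as a long $\cal T$-word and apply Proposition~\ref{T-word-length}. By Proposition~\ref{Dehn-twist-lifts+}, the word $w$ admits a $\cal T$-decomposition
\[
w = w_0 y_1^0 w_1 y_2^0 \cdots y_q^0 w_q
\]
with $q = |w|_{\cal T} \geq 1$ (since $w$ is not $\cal D$-zero), so that for every integer $j \geq 0$
\[
\cal D^j(w) = w_0 y_1^j w_1 y_2^j w_2 \cdots w_{q-1} y_q^j w_q.
\]
Concatenating the iterates, $\cal D^{(k)}(w) = \prod_{j=0}^{k-1} \cal D^j(w)$ becomes a $\cal T$-word of $\cal T$-length $qk$ whose intermediate factors lie in the finite set $\cal W = \{w_0, \ldots, w_q, w_q w_0\}$.

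The first step is to verify that this concatenation is $\cal T$-reduced in the sense required by Proposition~\ref{T-word-length}. The internal transitions $(y_i, w_i, y_{i+1})$ inside a single $\cal D^j(w)$ satisfy $y_i^{-m} \neq w_i y_{i+1}^{m'} w_i^{-1}$ because $w$ is $\cal T$-reduced; the boundary transitions $(y_q, w_q w_0, y_1)$ between consecutive iterates $\cal D^j(w)$ and $\cal D^{j+1}(w)$ are handled because $w$ is cyclically $\cal T$-reduced. Both properties are ensured by Definition-Remark~\ref{D-reduced}(4), since $w$ is $\cal D$-reduced but not $\cal D$-zero.

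Next I apply Proposition~\ref{T-word-length}(b), which yields constants $n_0 \in \N$ and $K_0 \geq 0$ depending only on $\cal W$ and $\cal T$. Restricting to the tail $P_k := \prod_{j=n_0}^{k-1} \cal D^j(w)$, whose exponents all satisfy $j \geq n_0$, I obtain
\[
|P_k| \;\geq\; \sum_{j=n_0}^{k-1} j \sum_{i=1}^{q} \|y_i\| \;-\; \bigl(q(k-n_0) - 1\bigr) K_0 \;\geq\; \tfrac{1}{2}\, C_0\, k^2
\]
for some $C_0 > 0$ and all $k$ sufficiently large, using that each $\|y_i\| \geq 1$ (the twistors of an efficient Dehn twist are nontrivial). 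Since the prefix $\cal D^{(n_0)}(w)$ has length bounded independently of $k$, the reverse triangle inequality $|ab| \geq |b| - |a|$ yields $|\cal D^{(k)}(w)| \geq C_1 k^2$ for an adjusted constant $C_1 > 0$.

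The matching upper bound is immediate: from the $\cal T$-decomposition, $|\cal D^j(w)| \leq \sum_i |w_i| + j \sum_i |y_i| = O(j)$, hence $|\cal D^{(k)}(w)| \leq \sum_{j=0}^{k-1} |\cal D^j(w)| = O(k^2)$. The main obstacle is the verification of $\cal T$-reducedness at the junctions between consecutive iterates; this ultimately rests on the cyclic $\cal T$-reducedness of $w$, which in turn follows from the efficiency of the chosen Dehn twist representative (absence of positively bonded edges).
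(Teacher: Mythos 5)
Your argument is correct and follows essentially the same route as the paper: produce the $\cal T$-decomposition of $w$ via Proposition~\ref{Dehn-twist-lifts+}, exploit that $w$ is $\cal D$-reduced (hence $\cal T$-reduced and cyclically $\cal T$-reduced) to control cancellation at both the internal transitions and the junctions between consecutive iterates, apply Proposition~\ref{T-word-length} to get the quadratic lower bound, and conclude with a triangle-inequality upper bound. The only stylistic difference is that you package $\cal D^{(k)}(w)$ as a single long $\cal T$-word with intermediate factors in the finite set $\{w_0,\ldots,w_q,w_qw_0\}$ and invoke part~(b) of Proposition~\ref{T-word-length} directly, whereas the paper first bounds a single iterate $|\cal D^k(w)|$ and then separately bounds the cancellation between $\cal D^i(w)$ and $\cal D^{i+1}(w)$ before combining; both lead to the same estimate.
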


\begin{proof}


It follows from Proposition~\ref{Dehn-twist-lifts+} that $w$ admits a $\cal{T}$-reduced decomposition 
$w = w_0 y_1^0 w_1 y_2^0 w_2 \ldots w_{q-1} y_q^0 w_q$
with $w_i \in F_n$ and 
$y_i \in \cal{T}$ which satisfies that 
$\cal{D}^{k}(w)=c_0y_1^{k}c_i \ldots c_{q-1}y_q^{k}c_q$.

It follows immediately from Proposition~\ref{T-word-length} that there exist constants $N_0 \geq 0$ and $K_0$ such that for $k \geq N_0$ we have:
$$|\cal{D}^{k}(w)| \geq \sum\limits_{i=1}^{q} k \| y_i \| + (q-1)K_0.$$

Since $w$ is $\cal{D}$-reduced, 
Remark~\ref{D-reduced} shows that the cancellation in between $\cal{D}^{i}(w)\cal{D}^{i+1}(w)$ is bounded by some constant $K_1$.

By taking $K_{0}'=\min\{K_0, K_1\}$, we obtain that, 
for $N_0 \leq k' \leq k$,
$$|\cal{D}^{(k',k)}(w)| \geq \sum\limits_{j=k'}^{k} \sum\limits_{i=1}^{q} j \| y_i \| + (kq-k'q-1)K_0^{\prime}.$$
Since $q \geq 1$, the cancellation bounds $(kq-k'q-1)K_0^{\prime}$ grows linearly with respect to $k$.

In particular, we may take $k'=N_0$ hence 
\begin{align*}
|\cal{D}^{(N_0,k)}(w)| 
& \geq \sum\limits_{j=N_0}^{k} \sum\limits_{i=1}^{q} j \| y_i \| + (kq-N_{0}q-1)K_0^{\prime}\\
& = \frac{k(k-N_0)}{2}\sum\limits_{i=1}^{q} \| y_i \| + (kq-N_{0}q-1)K_0^{\prime}
\end{align*}

and 
$$|\cal{D}^{(k)}(w)| \geq \frac{k(k-1)}{2}\sum\limits_{i=1}^{q} \| y_i \| + (kq-N_{0}q-1)K_0^{\prime}- |\cal{D}^{(N_0)}(w)|.$$

On the other hand, for all $k \in \Bbb{N}$ we always have 
$$|\cal{D}^{(k)}(w)|  \leq \sum\limits_{j=1}^{k}\sum\limits_{i=1}^{q} j |y_i|+ \sum\limits_{j=1}^{k}\sum_{i=0}^{q}|c_i| = \frac{k(k-1)}{2}\sum\limits_{i=1}^{q} |y_i| + k \sum\limits_{i=0}^{q}|c_i| $$

Together these two inequalities give $|\cal{D}^{(k)}(w)|  \simeq C_1 k^{2}$ for some $C_1 >0$.

\end{proof}

\begin{rem}\label{cyclical}
Let $N_0$ be as above.
Denote 
$$B=\cal{D}^{(N_0)}(w)=wD(w)D^2(w) \ldots D^{N_0-1}(w).$$ 
We can now find $N_1 > N_0 \in \Bbb{N}$ large enough so that the following two conditions hold:

\begin{enumerate}
\item[1.] 
Let $w_1$ be the prefix of $\cal{D}^{(N_0, N_1)}(w)$ which ends with $y_{q-1}$ such that $|B| \leq |w_1|$.

\item[2.]
The number $N_1$ is large enough so that $|y_{q}^{N_1-1}| \geq |w_1|+ |c_q| + |c_{q-1}|$.

\end{enumerate}

These two conditions do guarantee that the cyclic cancellation of $\cal{D}^{(N_1)}(w)$ cannot proceed into the subword 
$$w_1^{-1} \cal{D}^{(N_0, N_1)}(w) = c_{q-1} y_{q}^{t_0 } c_{q} \ldots c_{q-1}y_{q}^{N_1} c_q$$
further than $c_{q-1} y_{q}^{t_0 }$ on the left and $y_{q}^{N_1} c_q$ on the right, 
as otherwise a subword of length $|y_q|$ of $y_{q}^{t_0 }$ would cancel against a subword of $y_{q}^{N_1}$, 
which is impossible since $w^{-1} \neq v w v^{-1}$ in $F_n$, for any $w \neq 1$.

Hence we obtain from the above arguments
that also 
the cyclic length $\|\cal{D}^{(k)}(w)\|$ has quadratic growth with respect to $k$.

\end{rem}

\section{Main Result and Theorem}

The following is a slightly stronger version of what has been stated as Theorem \ref{q-growth} in the Introduction.

%




\begin{thm}
\label{main-result}
Let $\cal G$ be a minimal graph-of-groups with trivial edge groups, and let $H: \cal G \to \cal G$ be a graph-of-groups automorphism which acts trivially on the underlying graph $\Gamma = \Gamma(\cal G)$. 

Assume that for some vertex $v$ of $\Gamma$ the 
vertex group $G_v$ is a free group, that the 
induced vertex group automorphism $H_v: G_v \to G_v$ is a Dehn twist automorphism, and that for some edges $e$ of $\Gamma$ with endpoint $\tau(e) = v$ the $H$-correction term $\delta_e$ is not $H_v$-zero.

Then the induced outer automorphism $\hat{H}$ of $\pi_1\cal G$ has at least quadratic growth.
\end{thm}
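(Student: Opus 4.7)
The plan is to apply Proposition~\ref{free-product-growth}. I need to produce $g \in G_v$, a constant $C>0$, and a strictly increasing sequence $(n_i)$ for which
$$|B_{n_i}\, H_v^{n_i}(g)\, B_{n_i}^{-1}|_{G_v} \geq C\, n_i^2, \qquad \text{where } B_n := H_v^{(n)}(\delta_e^{-1}).$$

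First I show that $|B_n|_{G_v}$ already grows quadratically. Since $H_v$ is a Dehn twist automorphism of the free group $G_v$ (which has rank $\geq 2$ by Proposition~\ref{vertex-rank}), Lemma~\ref{Hconj1} applied inside an efficient Dehn twist representative of $H_v$ yields an $H_v$-reduced element $w \in G_v$ in the $H_v$-conjugacy class of $\delta_e^{-1}$, so that $\delta_e^{-1} = h\, w\, H_v(h)^{-1}$ for some $h \in G_v$. The hypothesis that $\delta_e$ is not $H_v$-zero (equivalently $\delta_e^{-1}$ is not, by Proposition~\ref{fixed-elements}(a) applied to $[\delta_e]$ and $[\delta_e^{-1}]$) forces $|w|_{\cal T} \geq 1$, where $\cal T$ is the twistor set of Proposition~\ref{Dehn-twist-lifts+}. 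A routine telescoping then gives $B_n = h\, H_v^{(n)}(w)\, H_v^n(h)^{-1}$. Proposition~\ref{quadratic1} supplies $|H_v^{(n)}(w)| \simeq C_1 n^2$, and since Dehn twists grow only linearly, $|H_v^n(h)| = O(n)$; the elementary estimate $|h\, x\, H_v^n(h)^{-1}| \geq |x| - |h| - |H_v^n(h)|$ therefore delivers $|B_n|_{G_v} \geq C_1'\, n^2$ for all sufficiently large $n$.

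For the second step I select $g$ from the fixed subgroup of $H_v$. By Proposition~\ref{Dehn-twist-lifts+}(2), $\mathrm{Fix}(H_v)$ has rank at least $2$ inside the free group $G_v$. Applying Lemma~\ref{cancellation-blocker} to the family $(B_n)_{n \geq 1}$ with the subgroup $V := \mathrm{Fix}(H_v)$ yields $g \in \mathrm{Fix}(H_v)$, a constant $C'' \geq 0$, and an infinite subsequence $(n_i)$ with $|B_{n_i}\, g\, B_{n_i}^{-1}|_{G_v} \geq 2|B_{n_i}|_{G_v} + |g|_{G_v} - C''$. The crucial observation is that $H_v^{n_i}(g) = g$, since $g$ is fixed by $H_v$, so this bound transfers verbatim to $|B_{n_i}\, H_v^{n_i}(g)\, B_{n_i}^{-1}| \geq 2 C_1'\, n_i^2 - C''$, which exceeds $C n_i^2$ for a suitable $C > 0$ and all large $n_i$. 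Proposition~\ref{free-product-growth} then delivers the desired quadratic growth of $\hat H$.

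The main technical subtlety lies in the first step: one cannot a priori assume that $\delta_e$ is itself $H_v$-reduced, so the quadratic growth of $B_n$ has to survive the outer conjugation by $h$ and $H_v^n(h)^{-1}$. The fact that $h$ is $n$-independent while $|H_v^n(h)|$ grows only linearly is precisely what makes the elementary length estimate suffice to preserve a quadratic lower bound. The second step, by contrast, becomes essentially automatic once one works inside $\mathrm{Fix}(H_v)$: there the $n_i$-dependence of $H_v^{n_i}(g)$ evaporates and the generic cancellation-blocking of Lemma~\ref{cancellation-blocker} applies in its raw form.
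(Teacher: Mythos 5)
Your proof is correct and follows the same overall route as the paper: reduce to Proposition~\ref{free-product-growth}, get quadratic growth of the iterated product via Proposition~\ref{quadratic1}, use Proposition~\ref{Dehn-twist-lifts+}(2) to secure $\mathrm{rk}(\mathrm{Fix}(H_v)) \geq 2$, and then block cancellation with Lemma~\ref{cancellation-blocker} by picking $g$ fixed under $H_v$. The one place you diverge from the paper's own proof is in fact an improvement: the paper applies Proposition~\ref{quadratic1} directly to $\delta_e$, but that proposition requires its input to be $\cal D$-reduced, a hypothesis the paper does not verify. You address this explicitly by invoking Lemma~\ref{Hconj1} to replace $\delta_e^{-1}$ with an $H_v$-reduced representative $w$ of its $H_v$-conjugacy class, telescoping the iterated product to $B_n = h\, H_v^{(n)}(w)\, H_v^n(h)^{-1}$, and noting that the constant conjugator $h$ and the linearly growing term $H_v^n(h)$ (linear because $H_v$ is a Dehn twist) cannot destroy a quadratic lower bound. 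This fills a gap that the paper leaves implicit and makes the first step of the argument genuinely complete.
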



\begin{proof}
By Propositon~\ref{free-product-growth} it suffices to show that for $w = \delta_e$ one can find an element $u \in G_v$ such that there is a subsequence of $|H_{v}^{(k)}(w^{-1}) H_v^k(u) H_{v}^{(-k)}(w)|$ which grows quadratically.

Since by assumption the correction term $w = \delta_e$ is not $H_v$-zero, we can apply Proposition \ref{quadratic1} to obtain that the length of the iterated product $H_{v}^{(-k)}(w)$ grows at least quadratically. 

One then uses Proposition \ref{Dehn-twist-lifts+} (2) to show that the subgroup ${\rm Fix}(H_v)$ of all fixed elements of $H_v$ has rank at least 2. This enables us to find via Lemma \ref{cancellation-blocker} an element $u \in {\rm Fix}(H_v)$ such that for infinitely many indices $n_i \geq 0$ the cancellation in the product $H_{v}^{(n_i)}(w^{-1}) H_v^{n_i}(u) H_{v}^{(-n_i)}(w) = H_{v}^{(n_i)}(w^{-1}) u H_{v}^{(-n_i)}(w)$ is uniformly bounded, so that from the previous paragraph we obtain that their lengths grow at least quadratically.


\end{proof}
 
Recall from Definition \ref{partial-Dehn-twists} (b)
that a partial Dehn twist $H: \cal G \to \cal G$ relative to a family of local Dehn twists is given, for any vertex $v$ of $\cal G$, through an identification isomorphism $G_v \cong \pi_1 \cal G_v)$ and a ``local'' Dehn twist representatives $D_v: \cal G_v \to \cal G_v$ which we can assume without loss of generality to be efficient. We say that for any edge $e$ of $\cal G$ the correction term $\delta_e$ is {\em locally zero} if it is $D_{\tau(e)}$-zero.

Then the last theorem gives directly:

\begin{cor}
\label{rel-D-twists}
Let $\hat \phi \in Out(\FN)$ be represented by a partial Dehn twist $H: \cal G \to \cal G$ relative to a family of local Dehn twists.
Assume that for some edge $e$ of $\cal G$ the correction term $\delta_e$ is not 
locally zero.

Then $\hat \phi$ has at least quadratic growth.
\qed

\end{cor}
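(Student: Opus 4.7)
The plan is to derive Corollary \ref{rel-D-twists} from Theorem \ref{main-result} by verifying that the hypotheses of the theorem are met (up to a standard normalization) by any partial Dehn twist representative $H: \cal G \to \cal G$ of $\hat \phi$ relative to a family of local Dehn twists.

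First I would check the structural hypotheses. By Definition \ref{partial-Dehn-twists}, partial Dehn twists retain conditions (1)--(3) of the classical Dehn twist, so $H$ acts trivially on the underlying graph $\Gamma(\cal G)$. Every vertex group $G_v$ is free, being a subgroup of $\pi_1 \cal G \cong \FN$. For the vertex $v = \tau(e) \in \cal V$, the automorphism $H_v$ is a Dehn twist automorphism by hypothesis, and ``$\delta_e$ not locally zero'' unfolds by Definition \ref{phi-zero} into the condition ``$\delta_e$ not $H_v$-zero'' demanded in Theorem \ref{main-result}. Minimality of $\cal G$ can be arranged by the standard reduction of collapsing valence-one subtrees with trivial vertex group, which preserves both $\pi_1 \cal G$ and the partial Dehn twist structure of $H$.

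The remaining discrepancy is that a partial Dehn twist only requires trivial edge groups at edges incident to $\cal V$ (Definition \ref{partial-Dehn-twists}(1)), whereas Theorem \ref{main-result} was stated under global triviality of edge groups. The main obstacle is thus to confirm that the proof of Theorem \ref{main-result}, and in particular its key ingredient Proposition \ref{free-product-growth}, relies only on triviality of edge groups locally at $v$. Inspecting that proof, the quadratic growth is produced by (i) invoking Proposition \ref{quadratic1} to see that $H_v^{(k)}(\delta_e)$ grows quadratically inside $G_v$, (ii) using Proposition \ref{Dehn-twist-lifts+}(2) together with Lemma \ref{cancellation-blocker} to select a cancellation-blocking element $u \in {\rm Fix}(H_v)$, and (iii) building a closed reduced word in $\pi_1 \cal G$ passing through $e$, containing $u$ at $v$, and returning through $\bar e$. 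Step (iii) depends only on the triviality of the edge groups at the star of $v$, which the partial Dehn twist supplies. Thus the corollary follows once the local form of Proposition \ref{free-product-growth} is verified, which should be a routine adaptation of Lemmas \ref{lemma5} and \ref{lemma6} to a vertex whose incident edges (but not necessarily all edges of $\cal G$) have trivial edge groups.
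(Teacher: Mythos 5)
Your route is the same as the paper's: invoke Theorem \ref{main-result} after checking its hypotheses. The paper states Corollary \ref{rel-D-twists} as following ``directly'' with no further argument, so you are right to pause and verify the hypotheses one by one, and the checks you perform (trivial action on $\Gamma$, freeness of $G_v$ as a subgroup of $\FN$, the translation of ``not locally zero'' into ``not $H_v$-zero'' via Definition \ref{phi-zero}, and reduction to the minimal case) are all correct and correspond to what the paper implicitly assumes.

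The edge-group issue you raise is a genuine wrinkle. If Definition \ref{partial-Dehn-twists}(a)(1) is read literally, only edges with terminal vertex in $\cal V$ are required to have trivial edge group, while Theorem \ref{main-result} (and with it Proposition \ref{free-product-growth}, Lemma \ref{lemma5}, Lemma \ref{lemma6}) is stated under global triviality of edge groups; the paper never addresses the discrepancy. (The Introduction's phrasing, which describes the partial Dehn twist case as differing from the ``special case'' --- all edge groups trivial --- \emph{only} in that vertex group automorphisms at $\cal V$ may be nontrivial, suggests that global triviality was intended but not carried into Definition \ref{partial-Dehn-twists}.) Your observation that the relevant edge $e$ automatically has trivial edge group, since ``not locally zero'' forces $\tau(e) \in \cal V$, is important and worth stating explicitly. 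Your proposed localization is the right fix, but I would flag one point you pass over a bit quickly: Lemma \ref{lemma5} estimates $\|W_i\|_{\cal B}$ from below by the sum of the $|v_k(i)|$ precisely because, with trivial edge groups everywhere, the union of vertex-group generating sets sits inside a basis of $\pi_1 \cal G$ with each $G_v$ a free factor. For the localization one only needs that $G_v$ itself (for the single $v = \tau(e) \in \cal V$ in play) is a free factor of $\pi_1 \cal G$, and this does hold because all edges at $v$ have trivial edge group; but this is the step that has to be made explicit rather than waved at as ``routine.'' With that noted, your argument is essentially correct and is a more honest account of the deduction than the paper's bare \qed.
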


As a final remark we want to point out that Corollary \ref{q-growth+} is indeed a direct consequence of the above corollary together with the main result of \cite{KY01}, which states that, in the situation of Corollary \ref{rel-D-twists}, if all of the correction terms for the edges of $\cal G$ are locally zero, then $H$ can be blown-up at the local Dehn twists to give a Dehn twist representative of the automorphism $\hat \phi$.

\end{document}